\numberwithin{equation}{section}
\numberwithin{figure}{section}
  \theoremstyle{plain}
  \newtheorem*{thm*}{\protect\theoremname}
\theoremstyle{plain}
\newtheorem{thm}{Theorem}[section]
\newtheorem{lem}[thm]{Lemma}
  \theoremstyle{remark}
  \newtheorem*{rem*}{\protect\remarkname}
  \theoremstyle{plain}
  \theoremstyle{remark}
  \providecommand{\remarkname}{Remark}
  \providecommand{\theoremname}{Theorem}
\providecommand{\theoremname}{Theorem}
\begin{document}
\sloppy

\title{A model space approach to some classical inequalities for rational
functions}

\author{Anton Baranov}

\address{Department of Mathematics and Mechanics, Saint Petersburg State University,
28, Universitetski pr., St. Petersburg, 198504, Russia}

\email{anton.d.baranov@gmail.com}

\author{Rachid Zarouf}

\address{Aix-Marseille Universit\'e, CNRS, Centrale Marseille, LATP, UMR 7353,
13453 Marseille, France.}

\email{rachid.zarouf@univ-amu.fr}

\thanks{The first author is supported by the Chebyshev Laboratory (St. Petersburg
State University) 
under RF Government grant 11.G34.31.0026, 
by JSC "Gazprom Neft" and by RFBR grant 12-01-00434.}
\begin{abstract}
We consider the set $\mathcal{R}_{n}$ of rational functions of degree
at most $n\geq1$ with no poles on the unit circle $\mathbb{T}$
and its subclass $\mathcal{R}_{n,\, r}$
consisting of rational functions without poles in the annulus 
$\left\{ \xi:\; r\leq|\xi|\leq\frac{1}{r}\right\}$. 
We discuss an approach based on the model space theory which brings some
integral representations for functions in $\mathcal{R}_{n}$ and their
derivatives. Using this approach we obtain $L^p$-analogs
of several classical inequalities for rational functions
including the inequalities by P. Borwein and T. Erd\'elyi, the Spijker Lemma
and S.M. Nikolskii's inequalities. These inequalities are shown
to be asymptotically sharp as $n$ tends to infinity and the poles
of the rational functions approach the unit circle $\mathbb{T}.$ 
\end{abstract}
\maketitle

\section{\label{Sec:Introduction}Introduction}

The goal of this paper is to give a unified approach
to several classical inequalities for rational functions. 
This approach is based on integral representations for rational 
functions and their derivatives. It makes possible to recover
several known results and obtain their $L^p$ analogs where
the estimate is given not only in terms of the degree 
of a rational function but also in terms of the distance from 
the poles to the boundary.

\subsection{Notations}

Let $\mathcal{P}_{n}$ be the space of complex analytic polynomials
of degree at most $n\geq1$ and let 
\[
\mathcal{R}_{n}=\left\{ \frac{P}{Q}\,:\; P,\, Q\in\mathcal{P}_{n},\; Q(\xi)\ne0\;\;
\text{for}\;\; |\xi|=1\right\} ,
\]
be the set of rational functions of degree at most $n$ (where $\deg\,\frac{P}{Q}=\max\left(\deg\, P,\,\deg\, Q\right)$)
without poles on the unit circle 
$\mathbb{T}=\left\{ \xi\in\mathbb{C}:\;|\xi|=1\right\}$.
We denote by $\|f\|_{L^p}$, $1\le p\le \infty$, the standard norms of the spaces
$L^p(\mathbb{T}, {m})$, where ${m}$
stands for the normalized Lebesgue measure on $\mathbb{T}$.
Denote by $\mathbb{D}=\left\{ \xi\in\mathbb{C}:\;|\xi|<1\right\} $
the unit disc of the complex plane and by $\overline{\mathbb{D}}=
\left\{ \xi\in\mathbb{C}:\;|\xi|\leq1\right\} $
its closure. For a given $r\in(0,\,1),$ we finally introduce the
subset 
\[
\mathcal{R}_{n,\, r}=\left\{ \frac{P}{Q}\,:\; P,\, Q\in\mathcal{P}_{n},\; 
Q(\xi)\ne0\;\; \text{for}\;\; r\leq|\xi|\leq\frac{1}{r}\right\}
\]
of $\mathcal{R}_{n},$ consisting of rational functions of degree
at most $n$ without poles in the annulus 
$\left\{ \xi:\; r\leq|\xi|\leq\frac{1}{r}\right\}$. 

We also introduce some notations specific to the theory
of model subspaces of the Hardy space $H^{p}$, $1\leq p\leq\infty$.
Denote by ${\rm Hol}\left(\mathbb{D}\right)$ the space of all
holomorphic functions on $\mathbb{D}$. The Hardy space $H^{p}=H^{p}(\mathbb{D}),$
$1\leq p<\infty$, is defined as follows: 
\[
H^{p}=\left\{ f\in{\rm Hol}\left(\mathbb{D}\right):\:\left\Vert f\right\Vert _{H^{p}}^{p}=\sup_{0\leq\rho<1}\int_{\mathbb{T}}\left|f(\rho\xi)\right|^{p}{\rm d}{m}(\xi)<\infty\right\} .
\]
As usual, we denote by $H^{\infty}$ the space of all bounded analytic
functions in $\mathbb{D}$. For any $\sigma=(\lambda_{1},\dots,\lambda_{n})\in\mathbb{D}^{n}$,
we consider the finite Blaschke product 
\[
B_{\sigma}=\prod_{k=1}^{n}b_{\lambda_{k}},
\qquad b_{\lambda}(z)=\frac{\lambda-z}{1-\overline{\lambda}z},
\]
$b_{\lambda}$ being the
elementary Blaschke factor associated to $\lambda\in\mathbb{D}$.
Define the model subspace $K_{B_{\sigma}}$ of the Hardy space $H^{2}$
by 
\[
K_{B_{\sigma}}=\left(B_{\sigma}H^{2}\right)^{\perp}=H^{2}\ominus B_{\sigma}H^{2}.
\]
The subspace $K_{B_{\sigma}}$ consists of rational functions of the
form $P/Q$, where $P\in\mathcal{P}_{n-1}$ and $Q$ is a polynomial of degree $n$
with the zeros $1/\overline{\lambda}_{1},\dots,1/\overline{\lambda}_{n}$
of corresponding multiplicities. 

For any Blaschke product $B$, the reproducing kernel of the model
space $K_{B}$ corresponding to a point $\xi\in\mathbb{D}$ is of
the form 
\[
k_{\xi}^{B}(z)=\frac{1-\overline{B(\xi)}B(z)}{1-\overline{\xi}z}.
\]

%***********************************************************

\subsection{Some classical inequalities for rational functions}

In this subsection we give a brief review of several well-known 
inequalities for polynomials and rational functions.

\subsubsection{Pointwise estimates for the derivatives of the functions 
in $\mathcal{R}_{n}$}

Let us start with the following theorem.
\begin{thm*}
\label{ThLR} For any function $f\in\mathcal{R}_{n}$ 
with the poles $\{a_{k}\}$ \textup(counting multiplicities\textup) we have 
\begin{equation}
|f'(\xi)|\le\big\| f\big\|_{L^{\infty}}
\bigg(\sum_{\left|a_{k}\right|<1}\frac{1-\left|a_{k}\right|^{2}}{\left|a_{k}-\xi\right|^{2}}
+ \sum_{\left|a_{k}\right|>1}\frac{\left|a_{k}\right|^{2}-1}{\left|a_{k}-\xi\right|^{2}}
\bigg),
\qquad|\xi|=1\label{LR}.
\end{equation}
 
\end{thm*}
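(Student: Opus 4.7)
The plan is to reduce \eqref{LR} to a Bernstein-type estimate driven by an auxiliary finite Blaschke product. Set
$$B_{-}(z)=\prod_{|a_k|<1}b_{a_k}(z),\quad B_{+}(z)=\prod_{|a_k|>1}b_{1/\overline{a_k}}(z),\quad B=B_{-}B_{+},$$
a Blaschke product of degree at most $n$, whose zeros in $\mathbb{D}$ are the poles of $f$ lying in $\mathbb{D}$ together with the Schwarz reflections $1/\overline{a_k}$ of the poles lying outside $\overline{\mathbb{D}}$. The identity $|1-\overline{\lambda}\xi|=|\xi-\lambda|$ on $\mathbb{T}$, combined with $B'(\xi)/B(\xi)=\xi^{-1}\sum_{k}(1-|\lambda_k|^{2})/|\xi-\lambda_k|^{2}$, shows that the right-hand side of \eqref{LR} is exactly $\|f\|_{L^{\infty}}\cdot|B'(\xi)|$. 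The task reduces to proving
$$|f'(\xi)|\le|B'(\xi)|\,\|f\|_{L^{\infty}},\qquad\xi\in\mathbb{T}.$$

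Next I would decompose $f=f_{+}+f_{-}$ via its Laurent expansion in an annulus around $\mathbb{T}$: $f_{+}\in H^{\infty}$ carries the poles $\{a_k:|a_k|>1\}$, while $f_{-}$, analytic in $\mathbb{C}\setminus\overline{\mathbb{D}}$ with $f_{-}(\infty)=0$, carries the poles $\{a_k:|a_k|<1\}$. A direct inspection of pole structures gives $f_{+}\in K_{B_{+}}$, and the Schwarz reflection $\widetilde{f}_{-}(z):=\overline{f_{-}(1/\overline{z})}$ lies in $K_{zB_{-}}$, with $|\widetilde{f}_{-}(\xi)|=|f_{-}(\xi)|$ and $|\widetilde{f}_{-}'(\xi)|=|f_{-}'(\xi)|$ for $\xi\in\mathbb{T}$. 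The decisive observation is that $f_{-}$ has only strictly negative Fourier coefficients, so it is $L^{2}$-orthogonal to $K_{B_{+}}\subset H^{2}$, whence the reproducing formula
$$f_{+}(w)=\int_{\mathbb{T}}f(\zeta)\,\overline{k_{w}^{B_{+}}(\zeta)}\,dm(\zeta),\qquad w\in\mathbb{D},$$
holds with $f$ in place of $f_{+}$; a companion formula, obtained via the pairing of two $H^{2}$-functions (so that the cross term with $f_{+}$ reduces to the single constant $f_{+}(0)$), recovers $f_{-}$ from $f$ through $\widetilde{f}_{-}$. This bypasses the fact that $f\mapsto f_{\pm}$ is unbounded on $L^{\infty}(\mathbb{T})$, which would preclude any separate estimation of $\|f_{\pm}\|_{L^{\infty}}$.

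Differentiating both identities in $w$ and letting $w\to\xi\in\mathbb{T}$ (after analytically extending the boundary kernels, using for instance the identity $k_{\xi}^{zB_{-}}(\zeta)=(\xi B_{-}(\xi)-\zeta B_{-}(\zeta))/(B_{-}(\xi)(\xi-\zeta))$ on $\mathbb{T}$, which is holomorphic in $\xi$ in a neighbourhood of $\mathbb{T}$) yields
$$f'(\xi)=\int_{\mathbb{T}}f(\zeta)\,H(\xi,\zeta)\,dm(\zeta),\qquad\xi\in\mathbb{T},$$
for an explicit rational kernel $H=H_{+}+H_{-}$. The inequality then follows from $|f'(\xi)|\le\|f\|_{L^{\infty}}\int_{\mathbb{T}}|H(\xi,\zeta)|\,dm(\zeta)$, provided the identification $\int_{\mathbb{T}}|H_{\pm}(\xi,\zeta)|\,dm(\zeta)=|B_{\pm}'(\xi)|$ can be established. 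This $L^{1}$-kernel computation is the main obstacle: one must unfold each $H_{\pm}$ as a sum over the zeros $\lambda_k$ of $B_{\pm}$, verify that the phases of the summands align in $\zeta$ so that the triangle inequality is saturated, and check that each summand contributes the Poisson-type term $(1-|\lambda_k|^{2})/|\xi-\lambda_k|^{2}$; summing over the two families then returns $|B_{-}'(\xi)|+|B_{+}'(\xi)|=|B'(\xi)|$, as required.
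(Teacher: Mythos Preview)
Your strategy coincides with the paper's: obtain an integral representation $f'(\xi)=\int_{\mathbb T} f\,\overline{\psi_\xi}\,dm$ built from the model-space reproducing kernels of $B_+$ and $B_-$, then apply $|f'(\xi)|\le\|f\|_{L^\infty}\|\psi_\xi\|_{L^1}$. The place where your outline diverges from the paper is exactly the step you flag as the ``main obstacle'', and there the paper has a cleaner idea than the phase-alignment computation you propose.

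Rather than differentiating $\overline{k_w^{B_+}}$ directly and then trying to evaluate the $L^1$-norm of the resulting rational kernel, the paper first observes that
\[
\frac{z}{(1-\bar\xi z)^2}-z\big(k_\xi^{B_+}(z)\big)^2\in B_+H^2,
\]
so this difference is orthogonal both to $f_+\in K_{zB_+}$ and to $f_-\in\overline{H^2_0}$. Hence one may take $H_+(\xi,\zeta)=\bar\zeta\,\overline{\big(k_\xi^{B_+}(\zeta)\big)^2}$ as the kernel, and then
\[
\int_{\mathbb T}|H_+(\xi,\zeta)|\,dm(\zeta)=\big\|k_\xi^{B_+}\big\|_{L^2}^2=|B_+'(\xi)|
\]
is immediate from the reproducing property, with no unfolding or phase analysis needed. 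The same trick handles $H_-$. Your plan to expand $H_\pm$ over the zeros and verify that ``the phases of the summands align'' is not how the computation actually goes; the naive derivative $\partial_w\overline{k_w^{B_+}}$ does \emph{not} decompose as a sum of terms with a common phase, so without the substitution above your final step would likely stall. A minor point: your $f_+$ lies in $K_{zB_+}$ rather than $K_{B_+}$ (it may carry a constant), but this is harmless since constants are killed by the derivative.
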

Inequality (\ref{LR}) has a long history. It was for the first time
explicitly stated and proved (by two different methods) for the case
when all poles are outside $\overline{\mathbb{D}}$ in a monograph
by V.N. Rusak \cite[Chapter III, Section 1]{rus} in 1979. 
Also, as Rusak mentions, this inequality is contained (only with
a hint of a proof) in the book of V.I. Smirnov and N.A. Lebedev
\cite[Chapter V, Section 3, Corollary 3]{sl}. 

At the same time this inequality (for poles both inside $\mathbb{D}$
and outside $\overline{\mathbb{D}}$) is a very special case of
results of M.B. Levin \cite{lev,lev1} which were obtained already
in 1974--1975, but remained unnoticed; these results apply to arbitrary
functions admitting pseudocontinuation.

Further extensions of Levin--Rusak inequality were obtained in the
1990s independently by two groups of specialists in polynomial inequalities
\cite{erd1,lmr}. In particular, in \cite{be} (see also \cite[Theorem 7.1.7]{erd1})
P. Borwein and T. Erd\'elyi obtained the following interesting improvement
which shows that the sum in \textup{(\ref{LR})}
may be replaced by the maximum. 

\begin{thm*}
\label{ThBE} 
For any function $f\in\mathcal{R}_{n}$ 
with the poles $\{a_{k}\}$ we have
\begin{equation}
\label{BE}
|f'(\xi)|\le\big\| f\big\|_{L^{\infty}}\max\bigg(\sum_{\left|a_{k}\right|<1}
\frac{1-\left|a_{k}\right|^{2}}{\left|a_{k}-\xi\right|^{2}},\,\,
\sum_{\left|a_{k}\right|>1}
\frac{\left|a_{k}\right|^{2}-1}{\left|a_{k}-\xi\right|^{2}} \bigg),
\qquad|\xi|=1.
\end{equation}
\end{thm*}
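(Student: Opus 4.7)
The plan is to derive an integral representation for $f'(\xi)$ from model-space reproducing kernels and then pass to $L^\infty$--$L^1$ duality. Set $B_1(z) = \prod_{|a_k|<1} b_{a_k}(z)$ and $B_2(z) = \prod_{|a_k|>1} b_{1/\overline{a_k}}(z)$; these are finite Blaschke products with zeros in $\mathbb{D}$. Using $b_\lambda'(\xi) = (|\lambda|^2-1)/(1-\overline\lambda\xi)^2$ and the fact that $|1-\overline\lambda\xi| = |\xi-\lambda|$ for $|\xi|=1$, one checks that $|B_j'(\xi)|$ coincides with the $j$-th sum inside the maximum in (\ref{BE}). A crucial auxiliary identity is that $\xi B_j'(\xi)/B_j(\xi) = |B_j'(\xi)|$ is real and positive on $\mathbb{T}$; this positivity underlies the sharp constant.

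Since $f$ has poles on both sides of $\mathbb{T}$, it does not sit in a single $K_B$. I would decompose $f = f_+ + f_-$ by partial fractions: the outside-pole part $f_+$ lies in $K_{B_2}$, while the conjugate reflection $f_-^*(z) := \overline{f_-(1/\bar z)}$ lies in $K_{B_1}$ and satisfies $|(f_-^*)'(\xi)| = |f_-'(\xi)|$ on $\mathbb{T}$. Starting from the reproducing-kernel identity $g(w) = \int_{\mathbb{T}} g(z)\,\overline{k^B_w(z)}\,dm(z)$ applied to each piece and differentiating in $w$, then reassembling so that only $\|f\|_{L^\infty}$ appears in the end (not the projections $\|f_\pm\|_{L^\infty}$, which are not controlled by $\|f\|_{L^\infty}$), I would obtain a representation
\[
f'(\xi) = \int_{\mathbb{T}} f(z)\,K_\xi(z)\,dm(z),
\]
with $K_\xi$ explicitly built from the derivatives of $k^{B_1}_\xi$ and $k^{B_2}_\xi$. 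Duality then reduces the theorem to showing the pointwise bound $\|K_\xi\|_{L^1(\mathbb{T})} \le \max(|B_1'(\xi)|, |B_2'(\xi)|)$.

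The main obstacle is precisely this last step: a straightforward triangle-inequality estimate yields only the sum $|B_1'(\xi)| + |B_2'(\xi)|$, which would merely reproduce the weaker Levin--Rusak inequality. Improving the sum to the maximum requires genuine cancellation between the inside and outside contributions in $K_\xi$. The positivity of $\xi B_j'(\xi)/B_j(\xi)$ on $\mathbb{T}$ should allow a phase-alignment argument: one rewrites $K_\xi = K_\xi^{(1)} + K_\xi^{(2)}$ so that the two parts have opposite phase structure, producing a majorization of the form $|K_\xi(z)| \le \max(|K_\xi^{(1)}(z)|, |K_\xi^{(2)}(z)|)$ or some pointwise identity that, upon integration, leaves only the larger of the two sums. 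This cancellation is the technical heart of the proof and where I expect most of the work to lie.
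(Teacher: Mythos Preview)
Your proposal has a genuine gap, and the paper itself flags exactly this gap as unresolved. The paper does \emph{not} prove the Borwein--Erd\'elyi inequality; it is quoted as a known result with references to \cite{be,erd1}. More to the point, the Remark immediately following Lemma~\ref{lemma2} says explicitly: ``The above integral representation immediately implies inequality (\ref{LR}) by Levin and Rusak. \dots\ It is, however, unclear, whether one can prove the Borwein--Erd\'elyi inequality \eqref{BE} using the representation \eqref{integrepder}.'' In other words, the authors themselves do not know how to push the model-space integral representation past the Levin--Rusak sum to the Borwein--Erd\'elyi maximum.

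Your outline follows that very route and correctly identifies the obstacle: the triangle inequality on $\|K_\xi\|_{L^1}$ gives only $|B_1'(\xi)|+|B_2'(\xi)|$. But the ``phase-alignment argument'' you sketch is not an argument; it is a hope. The kernel $\psi_\xi(u)=u\big(k_\xi^{B_2}(u)\big)^2-\xi^2\overline{u}\,\overline{\big(k_\xi^{\widetilde B_1}(u)\big)^2}$ has its two summands supported in $H^2_0$ and $\overline{H^2_0}$ respectively, so at the level of $L^1$-norms there is no cancellation to exploit: $\|\psi_\xi\|_{L^1}$ genuinely behaves like the sum, not the maximum, for generic configurations of poles. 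The original Borwein--Erd\'elyi proof does not go through an $L^1$ bound on a kernel; it uses a different mechanism (an interpolation/extremal argument on the rational function itself). If you want to prove \eqref{BE}, you will need an idea beyond duality against the integral representation---and that idea is not supplied here.
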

\medskip

%**********************************************************************

\subsubsection{Spijker's Lemma}

A well-known result by M.N. Spijker \cite{Sp} 
(known as Spijker's Lemma)  asserts
that the image of $\mathbb{T}$ under a complex rational map $f\in\mathcal{R}_{n}$
has length at most $2n\pi$, or, in other words,
\begin{equation}
\label{dolz}
\|f'\|_{L^1} \le n \|f\|_\infty, \qquad f\in \mathcal{R}_{n}.
\end{equation}
The inequality is apparently sharp (take $f(z) = z^n$). This 
result published by Spijker in 1991 ended a long search for the best 
bound in this inequality 
(e.g., in 1984 R.J. Leveque and L.N. Trefethen \cite{LeTr} 
proved the above inequality with $2n$ in place of $n$).
The importance of the sharp constants in this inequality is related to its role
in the \textit{Kreiss Matrix Theorem} \cite{Kr}.

However, it was recently noticed (see \cite{NN}) that
inequality \eqref{dolz} was discovered already in 1978 by
E.P. Dolzhenko  \cite{Dol} as a special case 
of more general results. Unfortunately, this paper (which appeared only 
in Russian) remained unknown to the specialists. Let us cite the following
beautiful theorem from \cite{Dol} where majorization on the whole circle 
is replaced by majorization on its subset.

\begin{thm*}
\label{Dolzhenko} 
If $E$ is a measurable subset of $\mathbb{T}$
or of a line in the complex plane and $f\in\mathcal{R}_{n}$, $|f(u)|\leq1$, 
$u\in E$,
then $\int_{E}|f'(u)|\,|du|\leq 2\pi n$; 
if $f$ is real valued, then the constant $2\pi$ can be replaced by $2$. 
The latter estimate is sharp for $n=0,1,2,\dots$ 
and for each $E$ with positive measure. 
\end{thm*}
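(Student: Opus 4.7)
The plan is to derive the bound by an integral-geometric argument of Cauchy--Crofton type, coupled with an algebraic count of the preimages of an arbitrary line under $f$. I first recall the classical identity: for a Lipschitz curve $\gamma\colon I\to\mathbb{C}$,
\[
\int_{I}|\gamma'(t)|\,dt\;=\;\frac{1}{2}\int_{\mathcal{L}}\#\,\gamma^{-1}(L)\,d\mu(L),
\]
where $\mathcal{L}$ is the space of affine lines in $\mathbb{R}^{2}$ equipped with the kinematic measure $d\mu$, normalized so that $\mu\{L\colon L\cap K\neq\emptyset\}$ equals the perimeter of any convex body $K$; in particular $\mu\{L\colon L\cap\overline{\mathbb{D}}\neq\emptyset\}=2\pi$. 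Parametrizing $E$ by arclength inside its ambient circle or line and applying this identity to $\gamma=f|_{E}$, the left-hand side of the theorem becomes
\[
\int_{E}|f'(u)|\,|du|\;=\;\frac{1}{2}\int_{\mathcal{L}}\#\{u\in E\colon f(u)\in L\}\,d\mu(L),
\]
and since $|f|\leq 1$ on $E$, only lines meeting $\overline{\mathbb{D}}$ contribute.

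The algebraic heart of the proof is the estimate
\[
\#\{u\in E\colon f(u)\in L\}\leq 2n\qquad\text{for every line }L\subset\mathbb{C}.
\]
Writing $L=\{w\colon\operatorname{Re}(\alpha w)=c\}$ with $|\alpha|=1$ and $c\in\mathbb{R}$, the membership condition reads $\alpha f(u)+\bar\alpha\,\overline{f(u)}=2c$. When $E\subset\mathbb{T}$, the substitution $\bar\xi=1/\xi$ (valid on the unit circle) turns this, after clearing denominators, into a polynomial equation in $\xi$ of degree at most $2n$; when $E$ lies on a straight line $\ell$, an affine real parametrization of $\ell$ makes $\bar u$ affine in the real parameter, and again yields a real polynomial of degree at most $2n$. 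In both cases we use $f=P/Q$ with $\deg P,\deg Q\leq n$, so that $f$ and $\overline f$ each contribute $n$ to the total degree. Plugging this count into the previous display gives
\[
\int_{E}|f'(u)|\,|du|\;\leq\;\frac{1}{2}\cdot 2n\cdot\mu\{L\colon L\cap\overline{\mathbb{D}}\neq\emptyset\}\;=\;\frac{1}{2}\cdot 2n\cdot 2\pi\;=\;2\pi n.
\]

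For real-valued $f$, the Cauchy--Crofton step collapses to the one-dimensional Banach indicatrix applied to $f\colon E\to[-1,1]$:
\[
\int_{E}|f'(u)|\,|du|\;=\;\int_{-1}^{1}\#\{u\in E\colon f(u)=v\}\,dv\;\leq\; n\cdot 2\;=\;2n,
\]
since the scalar equation $f(u)=v$ has at most $n$ solutions. Sharpness is witnessed by $f(z)=z^{n}$ on $E=\mathbb{T}$ in the complex case, and by the Chebyshev polynomial $T_{n}$ on $E=[-1,1]$ in the real case.

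I expect the principal obstacle to be the preimage bound $\#\leq 2n$: one must track the degree carefully once $\overline{f}$ has been rewritten as a rational function of the underlying real parameter (via $\bar\xi=1/\xi$ on the circle, or via the affine parametrization of a line). Once that algebraic count is in hand, the remainder is a standard application of Cauchy--Crofton together with the perimeter formula $\mu\{L\colon L\cap\overline{\mathbb{D}}\neq\emptyset\}=2\pi$ for the closed unit disc.
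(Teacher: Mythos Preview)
The paper does not prove this theorem: it is quoted from Dolzhenko's 1978 paper \cite{Dol} as a cited result, not as a contribution of the present work. The only related argument the paper gives is the remark (just after the theorem) that the special case $E=\mathbb{T}$, i.e.\ the inequality $\|f'\|_{L^1}\le n\|f\|_{L^\infty}$, follows by integrating the Levin--Rusak or Borwein--Erd\'elyi pointwise bound over $\mathbb{T}$; that route does not touch the general measurable-$E$ statement with majorization only on $E$, nor the real-valued refinement with constant $2$.

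Your Cauchy--Crofton / Banach-indicatrix argument is essentially correct and is, in fact, the method Dolzhenko used. A few points to tighten. First, Cauchy--Crofton in its textbook form is stated for rectifiable curves on an interval; for an arbitrary measurable $E$ you should invoke the area formula for Lipschitz maps $\mathbb{R}\to\mathbb{R}^2$ (applied to $t\mapsto f(\gamma(t))$ with $\gamma$ an arclength parametrization of the ambient circle or line), which yields $\int_E|f'|\,|du|=\tfrac12\int_{\mathcal L}\#\{u\in E:f(u)\in L\}\,d\mu(L)$ directly for measurable $E$. Second, in the preimage count you must note that the polynomial of degree $\le 2n$ (respectively $\le n$) obtained after clearing denominators can vanish identically only for a $\mu$-null set of lines $L$ (those containing the whole image $f(\mathbb{T})$ or $f(\ell)$), so such $L$ do not affect the integral. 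Third, your sharpness witnesses $z^n$ on $\mathbb{T}$ and $T_n$ on $[-1,1]$ address only those specific $E$; the theorem asserts sharpness of the constant $2$ for \emph{every} $E$ of positive measure, which requires a construction concentrating the derivative on a prescribed small arc or interval. These are refinements rather than genuine gaps in the main argument.
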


We refer to a recent paper by N.K. Nikolski \cite{NN}
for a detailed account of the history of inequality \eqref{dolz}
and its relations to the Kreiss Matrix Theorem, as well as
for some new developments in the Kreiss theory.

Let us also mention that the Dolzhenko--Spijker inequality \eqref{dolz}
follows easily from (\ref{BE}) (or from (\ref{LR}).
This was mentioned by X. Li \cite{Li} (see also \cite[Remark 7]{FLM}). 
Indeed, integrating (\ref{BE}) with respect
to the normalized Lebesgue measure ${m}$, we obtain \eqref{dolz}.
\medskip

%**********************************************************************

\subsubsection{S.M. Nikolskii's inequalities}

By the well-known results of S.M. Nikolskii \cite{SMNik} (1951), 
the essentially sharp inequality 
\begin{equation}
\label{Nik2}
\left\Vert f\right\Vert _{L^{q}}\leq c(p,\, q)
n^{\frac{1}{p}-\frac{1}{q}}\left\Vert f\right\Vert _{L^{p}}
\end{equation}
holds for all polynomials $f$ of degree at most $n$
and for all $1\leq p<q\leq\infty $  
with a constant $c(p,\, q)$ depending only on $p$ and $q$.
Analogous inequalities were proved in
\cite{SMNik} for trigonometric polynomials of several variables.
A few years later (1954) G. Szeg\"o and A. Zygmund \cite{SzZy}
among other more general results rediscovered (\ref{Nik2})
(for polynomials of one variable only) and extended it 
to the whole range $0<p<q\leq\infty $. 
\bigskip

%**********************************************************************

\section{\label{Sec:Main-results}Main results}

In the present paper we use an approach based on the model space theory
to obtain some extensions of inequalities of type (\ref{BE}), (\ref{dolz}),
and (\ref{Nik2}) for functions in $\mathcal{R}_{n}$ or $\mathcal{R}_{n,\, r}$. 
The estimates we obtain will depend not only on the degree
of a rational function $n$ but also on the distance $1-r$ from 
the poles to the boundary. We show the sharpness of the obtained inequalities as both
$n\to \infty$ and $r\to 1-$.

Our main tools are integral representations for rational functions and their 
derivatives. Integral representations for a derivative 
of a rational function were introduced 
and successfully used by R.~Jones, X.~Li, R.N.~Mohapatra and R.S.~Rodriguez 
\cite[Lemma 4.3]{JLMR} (for rational functions without poles in $\overline{\mathbb{D}}$) 
and by X.~Li \cite[Lemma 3]{Li} in the general case. At the same time
such representations are known in the setting
of general model spaces where they were also used to produce some
estimates for the derivatives \cite{Aleks,Bar,dy}. 

From now on, for two positive functions $U$ and $V$, we say that $U$ is dominated
by $V$, denoted by $U\lesssim V$, if there is a constant $c>0$
such that $U\leq cV$; we say that $U$ and $V$ are comparable,
denoted by $U\asymp V$, if both $U\lesssim V$ and $V\lesssim U$.

%******************************************************************

\subsection{$L^{p}$-version of the Borwein--Erd\'elyi inequality}

We first give an analog of \eqref{LR}--\eqref{BE} in which the $L^{\infty}$-norm
at the right is replaced by the $L^{p}$-norm, $p\geq1.$

\begin{thm}
\label{TH_BE_Gen} 
Let $1\leq p\leq\infty $. Then

$({\rm i})$ For any function $f\in\mathcal{R}_{n}$ 
with the set of poles $a = \{a_{k}\}$ 
\textup(repeated counting multiplicities\textup) we have 
\begin{equation}
|f'(\xi)|\leq 
\bigg(\mathcal{D}^{\frac{1}{p}}_1(a)
\sum_{\left|a_{k}\right|<1}
\frac{1-\left|a_{k}\right|^{2}}{\left|a_{k}-\xi\right|^{2}}
+
\mathcal{D}^{\frac{1}{p}}_2(a)
\sum_{\left|a_{k}\right|>1}\frac{\left|a_{k}
\right|^{2}-1}{\left|a_{k}-\xi\right|^{2}} \bigg)\big\|f\big\|_{L^{p}} 
\label{BEp}
\end{equation}
for all $|\xi|=1$, where 
\begin{equation}
\label{dd}
\mathcal{D}_1(a) = 
\sum_{\left|a_{k}\right|<1}\frac{1+\left|a_{k}\right|}{1-\left|a_{k}\right|}, \qquad
\mathcal{D}_2(a) =
\sum_{\left|a_{k}\right|>1}
\frac{\left|a_{k}\right|+1}{\left|a_{k}\right|-1}.
\end{equation}

$({\rm ii})$ Moreover, \eqref{BEp} is sharp in the following sense:
for any $p\in [1, \infty]$ there exists a constant $c(p)>0$ such that 
for any $n\ge 2$ and any $r\in (0,1)$ there exists 
$f\in\mathcal{R}_{n}$ with the poles $\{a_{k}\}$ on the circle $\{|z| = \frac{1}{r}\}$ 
such that 
\[
\frac{f'(-1)}{\big\| f\big\|_{L^{p}}} 
\ge c(p) \mathcal{D}^{\frac{1}{p}}_2(a)
\sum_{\left|a_{k}\right|>1}\frac{\left|a_{k}\right|^{2}-1}{\left|a_{k}+1\right|^{2}}.
\]
\end{thm}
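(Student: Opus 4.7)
\emph{Strategy.} The plan is to realize $f'(\xi)$ as an integral of $f$ against an explicit derivative-reproducing kernel on $\mathbb{T}$, apply H\"older's inequality, and control the kernel's $L^{p'}$-norm by log-convex interpolation between its $L^1$- and $L^\infty$-norms. Split the poles into $\{a_k:|a_k|<1\}$ and $\{a_k:|a_k|>1\}$ and write $f=f_++f_-$ on $\mathbb{T}$ with $f_+$ analytic in $\mathbb{D}$ and $f_-$ analytic in $\mathbb{C}\setminus\overline{\mathbb{D}}$, vanishing at $\infty$. Then $f_+\in K_{B_+}$ with $B_+$ the Blaschke product whose zeros are $\{1/\overline{a_k}:|a_k|>1\}\subset\mathbb{D}$, while the antilinear reflection $\tilde f_-(z):=\overline{f_-(1/\bar z)}$ lies in $K_{B_-}$ with $B_-$ having zeros $\{a_k:|a_k|<1\}$; on $\mathbb{T}$ one has $\|\tilde f_-\|_{L^p}=\|f_-\|_{L^p}$ and $|\tilde f_-\,'(\xi)|=|f_-'(\xi)|$, so the estimate for $f_-'$ reduces to the same estimate for $\tilde f_-\,'\in K_{B_-}$.

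\emph{Kernel estimates.} For $g\in K_B$ with $B$ a finite Blaschke product with zeros $\{\lambda_j\}$, differentiating the reproducing identity $g(\xi)=\langle g,k_\xi^B\rangle_{L^2(\mathbb{T})}$ in $\xi$ produces
\[
g'(\xi)=\int_{\mathbb{T}}g(z)\,\overline{L_\xi^B(z)}\,dm(z),\qquad |\xi|=1,
\]
where the boundary kernel $L_\xi^B$ is obtained by differentiating the form $k_\xi^B(z)=\xi\bigl(B(z)-B(\xi)\bigr)/\bigl(B(\xi)(z-\xi)\bigr)$ in $\xi$. H\"older gives $|g'(\xi)|\le\|g\|_{L^p}\|L_\xi^B\|_{L^{p'}}$, and $\|L_\xi^B\|_{L^{p'}}$ is controlled by two endpoints: \emph{(a)} $\|L_\xi^B\|_{L^1}\lesssim |B'(\xi)|=\sum_j(1-|\lambda_j|^2)/|\xi-\lambda_j|^2$, which after the substitutions $\lambda_j=1/\overline{a_j}$ (for $B_+$) and $\lambda_j=a_j$ (for $B_-$) recovers the two sums in \eqref{BEp}; \emph{(b)} the pointwise bound $\|L_\xi^B\|_{L^\infty}\lesssim |B'(\xi)|\cdot\mathcal{D}$ with $\mathcal{D}=\sum_j(1+|\lambda_j|)/(1-|\lambda_j|)$ equal to $\mathcal{D}_1(a)$ or $\mathcal{D}_2(a)$ accordingly. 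The proof of (b) exploits the first-order cancellation $1-\overline{B(\xi)}B(z)=\overline{B(\xi)}(B(\xi)-B(z))$ at $z=\xi$ together with the crude estimate $\|B'\|_{L^\infty(\mathbb{T})}\lesssim\mathcal{D}$. Log-convexity of $L^q$-norms then gives
\[
\|L_\xi^B\|_{L^{p'}}\le \|L_\xi^B\|_{L^1}^{1/p'}\|L_\xi^B\|_{L^\infty}^{1/p}\lesssim |B'(\xi)|\,\mathcal{D}^{1/p},
\]
and summing the $f_+$ and $\tilde f_-$ contributions yields \eqref{BEp}.

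\emph{Obstacles and sharpness.} I expect step (b) to be the technical heart of the argument: the cancellation in $L_\xi^B$ at $z=\xi$ must be exploited carefully so that only one factor of $\mathcal{D}$ (not $\mathcal{D}^2$) is lost, and verifying the claimed $L^1$-bound for $L_\xi^B$ rigorously (not only as a consequence of \eqref{BE} restricted to $K_B$) requires some direct computation. A secondary concern is that the splitting $f\mapsto f_\pm$ is unbounded on $L^1$ and $L^\infty$; this is handled either by a single unified representation of $f'$ valid for all of $\mathcal{R}_n$, or by treating the endpoint $p=\infty$ directly via \eqref{BE}. For part (ii), take $B=b_{-r}^n$ and $f=L_{-1}^B\in K_B\subset\mathcal{R}_n$, whose $n$-fold pole sits at $-1/r\in\{|z|=1/r\}$. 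Then $f'(-1)=\|L_{-1}^B\|_{L^2}^2$ and $\|f\|_{L^p}=\|L_{-1}^B\|_{L^p}$, while for this symmetric Blaschke product the two endpoint bounds above are sharp and the profile of $L_{-1}^B$ (a bump of height $\asymp|B'(-1)|\mathcal{D}$ and effective support of length $\asymp 1/\mathcal{D}$ on $\mathbb{T}$) makes the interpolation tight, yielding $\|L_{-1}^B\|_{L^q}\asymp |B'(-1)|\,\mathcal{D}^{1-1/q}$ for all $q\in[1,\infty]$. Hence
\[
\frac{f'(-1)}{\|f\|_{L^p}}=\frac{\|L_{-1}^B\|_{L^2}^2}{\|L_{-1}^B\|_{L^p}}\asymp \frac{\bigl(|B'(-1)|\mathcal{D}^{1/2}\bigr)^2}{|B'(-1)|\mathcal{D}^{1-1/p}}=|B'(-1)|\,\mathcal{D}^{1/p},
\]
which, noting that here $|B'(-1)|=n(1+r)/(1-r)=\sum_{|a_k|>1}(|a_k|^2-1)/|a_k+1|^2$ and $\mathcal{D}=\mathcal{D}_2(a)$, is the claimed lower bound with a constant $c(p)>0$ uniform in $n\ge 2$ and $r\in(0,1)$; the asymptotic tightness of the two endpoint norms for $B=b_{-r}^n$ reduces to Laplace-method estimates of elementary integrals near $z=-1$.
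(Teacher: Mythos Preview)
Your overall strategy---integral representation plus H\"older plus endpoint interpolation---matches the paper's, but the choice of kernel is different, and that difference matters.

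The paper does not use the kernel $L_\xi^B=\partial_{\bar\xi}k_\xi^B$ obtained by differentiating the reproducing kernel. Instead it observes that for $h\in K_B$ one has $h'(\xi)=\langle h,\,z(k_\xi^B)^2\rangle$, because $z(1-\bar\xi z)^{-2}-z(k_\xi^B)^2\in BH^2$. This ``squared kernel'' is the crux: its $L^1$-norm is \emph{exactly} $\|k_\xi^B\|_{L^2}^2=|B'(\xi)|$, and the $L^\infty$-bound $|k_\xi^B(u)|^2\le |B'(\xi)|\cdot\mathcal{D}$ follows in one line from Cauchy--Schwarz applied to $|k_\xi^B(u)|\le\sum_j(1-|\nu_j|^2)/\bigl(|1-\bar\nu_j\xi|\,|1-\bar\nu_j u|\bigr)$. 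Both endpoint bounds carry constant $1$, so the interpolation yields \eqref{BEp} with constant $1$ as stated. Your kernel $L_\xi^B$ represents the same functional, but it lacks this square structure; your claimed bounds (a) and (b) are only asserted with implicit constants, and you yourself flag that the $L^1$-bound ``requires some direct computation'' and that the $L^\infty$-bound needs careful cancellation analysis. Even if both hold, you would obtain \eqref{BEp} only up to an unspecified constant, which is weaker than the theorem.

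On the splitting issue: the paper avoids it entirely by writing a single kernel $\psi_\xi(u)=u(k_\xi^{B_2}(u))^2-\xi^2\bar u\,\overline{(k_\xi^{\widetilde B_1}(u))^2}$ with $f'(\xi)=\langle f,\psi_\xi\rangle$ for every $f\in\mathcal{R}_n$ (Lemma~\ref{lemma2}). The two pieces of $\psi_\xi$ live in $H_0^2$ and $\overline{H_0^2}$ respectively, so the orthogonality $f_-\perp H^2$, $f_+\perp\overline{H_0^2}$ is baked into the representation and the Riesz projection never appears. You gesture at ``a single unified representation'' as a fix but do not supply it; this is precisely what Lemma~\ref{lemma2} provides.

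For part~(ii) the paper uses a different test function: the transplanted Dirichlet kernel $f(z)=b_{-r}'(z)\sum_{k=0}^{n-2}b_{-r}^k(z)$ for $p>1$ (and a squared variant for $p=1$), for which a change of variable $\zeta=b_{-r}(z)$ reduces $\|f\|_{L^p}$ to known Dirichlet-kernel norms and makes $f'(-1)$ explicitly computable. Your extremal choice $f=L_{-1}^B$ is natural, but the claimed asymptotics $\|L_{-1}^B\|_{L^q}\asymp|B'(-1)|\,\mathcal{D}^{1-1/q}$ are left at the level of a bump heuristic; turning that into a proof is roughly as much work as the paper's explicit computation, and is not carried out.
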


Clearly, inequality \eqref{LR} is the limit case of \eqref{BEp} when $p=\infty$.
\medskip

%**********************************************************************

\subsection{An $L^{p}$-version of the Dolzhenko--Spijker Lemma}

Next we obtain a version of \eqref{dolz} where the $L^{\infty}$-norm
at the right is replaced by the $L^{p}$-norm, $p \geq 1$. 
Note that this inequality does not follow from (\ref{BEp}) 
by integration on the unit circle $\mathbb{T}$,
as it was the case for $p=1$. 

\begin{thm}
\label{ThGenSpLem} 
For every rational function $f \in \mathcal{R}_{n}$ 
having $n_1$ poles inside $\mathbb{D}$ and $n_2$ poles outside of  
$\mathbb{\overline{D}}$, we have
\begin{equation}
\label{GenSpLem}
\left\Vert f'\right\Vert _{L^{1}}\leq 
\Big(n_1^{1-\frac{1}{p}} \mathcal{D}^{\frac{1}{p}}_1(a)+
n_2^{1-\frac{1}{p}}\mathcal{D}^{\frac{1}{p}}_2(a)\Big)
\big\| f\big\|_{L^{p}}
\end{equation}
where $a=\{a_{k}\}$ stand for the poles of $f$ and $\mathcal{D}_1(a)$
and $\mathcal{D}_2(a)$ are defined in \eqref{dd}.
\end{thm}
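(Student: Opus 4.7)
The main challenge is that naive integration of the pointwise bound \eqref{BEp} on $\mathbb T$ only produces the weaker estimate $\|f'\|_{L^1} \le (n_1 \mathcal D_1(a)^{1/p} + n_2 \mathcal D_2(a)^{1/p}) \|f\|_{L^p}$, off by a factor of $n_i^{1/p}$. My plan is therefore to proceed by Riesz--Thorin interpolation between the $p = 1$ and $p = \infty$ endpoints, applied separately to the inside- and outside-pole contributions of the derivative.

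Concretely, on the finite-dimensional subspace $V_a \subset \mathcal R_n$ of rational functions with pole set $a$, I would split $f' = D_1 f + D_2 f$, where $D_1 f$ (resp.\ $D_2 f$) collects the contributions coming from poles in $\mathbb D$ (resp.\ in $\mathbb C \setminus \overline{\mathbb D}$). Using the reproducing-kernel-based integral representation underlying the proof of Theorem~\ref{TH_BE_Gen}, each $D_i$ can be written as an integral operator $D_i f(\xi) = \int_{\mathbb T} K_i(\xi, z) f(z) dm(z)$, with a kernel $K_i$ built from the appropriate model-space reproducing kernel $k_\xi^B$ and its $\xi$-derivative.

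I would then establish two endpoint estimates for each $D_i$: $\|D_i f\|_{L^1} \le n_i \|f\|_{L^\infty}$ at $p = \infty$, and $\|D_i f\|_{L^1} \le \mathcal D_i(a) \|f\|_{L^1}$ at $p = 1$. For the first, the Levin--Rusak bound \eqref{LR} (i.e., \eqref{BEp} with $p = \infty$) has a natural splitting $|D_i f(\xi)| \le P_i(\xi) \|f\|_{L^\infty}$, with $P_1(\xi) = \sum_{|a_k|<1} \frac{1-|a_k|^2}{|a_k-\xi|^2}$ and similarly for $P_2$; integrating over $\xi \in \mathbb T$ and using the Poisson-kernel identity $\int_{\mathbb T} \frac{1-|a_k|^2}{|a_k-\xi|^2} dm(\xi) = 1$ yields the endpoint bound. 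For the second, Fubini reduces the problem to the Schur-type estimate $\sup_{z \in \mathbb T} \int_{\mathbb T} |K_i(\xi, z)| dm(\xi) \le \mathcal D_i(a)$, which I would verify by a direct kernel calculation; in the single-pole case (one outside pole $a$) one finds $K_2(\xi, z) = \frac{(|a|^2-1)z}{(a-\xi)^2(\bar a z - 1)}$, giving $\int_{\mathbb T} |K_2| dm(\xi) = 1/|z-a|$, whose supremum in $z$ is $1/(|a|-1) \le \mathcal D_2(\{a\})$. Riesz--Thorin interpolation applied to $D_i : (V_a, \|\cdot\|_{L^p}) \to L^1(\mathbb T)$ then gives $\|D_i f\|_{L^1} \le n_i^{1-1/p} \mathcal D_i(a)^{1/p} \|f\|_{L^p}$, and summing over $i = 1, 2$ proves \eqref{GenSpLem}. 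Splitting before interpolating is essential: by H\"older, $\sum_i n_i^{1-1/p} \mathcal D_i^{1/p} \le n^{1-1/p} \mathcal D^{1/p}$, so the split bound is strictly sharper than what interpolating the full derivative map would produce.

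The principal obstacle is the $p = 1$ endpoint $\|D_i f\|_{L^1} \le \mathcal D_i(a) \|f\|_{L^1}$ in the multi-pole case: it cannot be deduced by simple integration of the pointwise inequality \eqref{BEp} at $p = 1$ (which would only give the weaker $n_i \mathcal D_i$) and must instead be established by direct analysis of the reproducing-kernel structure of $K_i$, for instance via a Malmquist--Walsh expansion of $K_B$ followed by careful estimation of $\int_{\mathbb T} |K_i(\xi, z)| dm(\xi)$ in which the $\xi$-integration absorbs one full factor of $n_i$.
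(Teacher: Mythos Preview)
Your approach is correct and, at its core, coincides with the paper's: both rest on the integral representation $f'(\xi) = \langle f, \psi_\xi\rangle$ with $|\psi_\xi(\tau)| \le |k_\xi^{B_2}(\tau)|^2 + |k_\xi^{\widetilde B_1}(\tau)|^2$, and then interpolate between the $L^1$ and $L^\infty$ endpoints. The paper simply packages this more directly, avoiding Riesz--Thorin: Fubini gives
\[
\|f'\|_{L^1} \le \int_{\mathbb T} |f(\tau)|\,\bigl(|\widetilde B_1'(\tau)| + |B_2'(\tau)|\bigr)\,dm(\tau),
\]
H\"older yields $\|f\|_{L^p}\bigl(\|\widetilde B_1'\|_{L^{p'}} + \|B_2'\|_{L^{p'}}\bigr)$, and then the elementary bound $\|B'\|_{L^{p'}} \le \|B'\|_{L^\infty}^{1-1/p'}\|B'\|_{L^1}^{1/p'} \le \mathcal D_i(a)^{1/p}\,n_i^{1-1/p}$ (using $\|B'\|_{L^1} = n_i$ and $\|B'\|_{L^\infty} \le \mathcal D_i(a)$) finishes the proof. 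Your Riesz--Thorin step is an equivalent but slightly heavier reformulation of this last line.

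More importantly, what you flag as the ``principal obstacle''---the $p=1$ Schur bound $\sup_{z\in\mathbb T} \int_{\mathbb T} |K_i(\xi,z)|\,dm(\xi) \le \mathcal D_i(a)$---is in fact immediate and requires no Malmquist--Walsh expansion or delicate estimation. Since $|K_i(\xi,z)| = |k_\xi^{B_i}(z)|^2$ is \emph{symmetric} in $\xi$ and $z$ on $\mathbb T\times\mathbb T$, the identity \eqref{fact} that you already invoke for the $p=\infty$ endpoint gives
\[
\int_{\mathbb T} |K_i(\xi,z)|\,dm(\xi) = |B_i'(z)| \le \mathcal D_i(a)
\]
directly, in the multi-pole case just as in the single-pole one. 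Once you see this symmetry, the paper's one-line Fubini computation and your two endpoint bounds are literally the same calculation.
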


This inequality is asymptotically sharp when $n = n_1+n_2$ tend to $\infty$
and ${\rm dist}\, (\{a_k\}, \mathbb{T}) \to 0$ (see Theorem \ref{ThBernLpLq} below).
\medskip

%**********************************************************************

\subsection{$L^{p}-L^{q}$ Bernstein-type inequality}

Now we consider the following Bernstein-type problem, which
could be interpreted as a generalization of 
\eqref{BEp} and \eqref{GenSpLem}:
given $n\geq 1$, $r\in(0,\,1)$ and $1\leq p,\, q\leq\infty $,
let $\mathcal{C}_{n,\, r}\left(L^{q},\, L^{p}\right)$ be the best possible constant
in the inequality
\[
\left\Vert f'\right\Vert _{L^{q}}\leq\mathcal{C}_{n,\, r}\left(L^{q},\, 
L^{p}\right)\left\Vert f\right\Vert _{L^{p}},\qquad f\in\mathcal{R}_{n,\, r}.
\]
One could also introduce $\mathcal{C}_{n}\left(L^{q},\, L^{p}\right)=
\sup_{r\in(0,1)}\mathcal{C}_{n,\, r}\left(L^{q},\, L^{p}\right).$
The Dolzhenko--Spijker Lemma means exactly that  
$\mathcal{C}_{n}\left(L^{1},\, L^{\infty}\right)=n$.
It is however easy to see (take $f(z)=(1-rz)^{-1}$ as a test function)
that $\mathcal{C}_{n}\left(L^{q},\, L^{p}\right) = \infty$
unless $q=1$ and $p=\infty$. Thus, the dependence on $r$
(that is, on the distance from the poles to the boundary)
appears naturally in the problem.

\begin{thm}
\label{ThBernLpLq}
Let $n\geq1,$ $r\in(0,\,1),$ and $1\leq p,\, q\leq\infty .$
We have 
\begin{equation}
\label{eq:qgeqp}
\mathcal{C}_{n,\, r}\left(L^{q},\, L^{p}\right) \asymp
\begin{cases}
\left(\frac{n}{1-r}\right)^{1+\frac{1}{p}-\frac{1}{q}}, & q\geq p, \\
\frac{n}{\left(1-r\right)^{1+\frac{1}{p}-\frac{1}{q}}},      & q\leq p,
\end{cases}
\end{equation}
with the constants depending only on $p$ and $q$, but not on $n$ and $r$.

Moreover, the constant in upper bound 
is in both of these two cases $\left(1+r\right)^{1+\frac{1}{p}-\frac{1}{q}}$: we have
\begin{equation}
\label{qgeqp_up}
\mathcal{C}_{n,\, r}\left(L^{q},\, L^{p}\right)\leq  
\begin{cases}
(1+r)^{1+\frac{1}{p}-\frac{1}{q}}\left(\frac{n}{1-r}\right)^{1+\frac{1}{p}-\frac{1}{q}}, & q\geq p, \\
(1+r)^{1+\frac{1}{p}-\frac{1}{q}}\frac{n}{\left(1-r\right)^{1+\frac{1}{p}-\frac{1}{q}}}, & q\leq p. 
\end{cases}
\end{equation}
\end{thm}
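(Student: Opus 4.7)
The plan is to establish matching upper and lower bounds for $\mathcal{C}_{n,r}(L^{q},L^{p})$, separating the roles of the degree $n$ and the distance $1-r$ from the poles to $\mathbb{T}$. For the upper bound \eqref{qgeqp_up}, I would reduce to Theorems~\ref{TH_BE_Gen} and~\ref{ThGenSpLem}. For any $f\in\mathcal{R}_{n,r}$, every pole $a_{k}$ is at distance at least $1-r$ from $\mathbb{T}$, so each Poisson-type summand in \eqref{BEp} is at most $(1+r)/(1-r)$, and $\mathcal{D}_{j}(a)\leq n_{j}(1+r)/(1-r)$ for $j=1,2$. Substituting these uniform bounds into \eqref{BEp} and taking $\sup_{\xi}$ yields the endpoint $q=\infty$ of \eqref{qgeqp_up}, and substituting them into \eqref{GenSpLem} yields the endpoint $q=1$, both with the right constant $(1+r)^{1+\frac{1}{p}-\frac{1}{q}}$. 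For intermediate $q$ I would interpolate via the integral representation that underlies both theorems: on the model space $K_{B_{\sigma}}$ associated to the poles of $f$, differentiation $D\colon f\mapsto f'$ is a bounded linear integral operator on $L^{p}(\mathbb{T})$ with a kernel derived from $\partial_{z}\overline{k_{z}^{B_{\sigma}}(\xi)}$, so Riesz--Thorin applies.

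For the lower bound I would use an explicit test function in each regime. When $q\leq p$, take $f(z)=(1-rz)^{-n}\in\mathcal{R}_{n,r}$: from the standard estimate $\int_{\mathbb{T}}|1-r\xi|^{-k}\,dm(\xi)\asymp(1-r)^{1-k}$ for $k>1$ one computes $\|f\|_{L^{p}}\asymp(1-r)^{\frac{1}{p}-n}$ and $\|f'\|_{L^{q}}\asymp n(1-r)^{\frac{1}{q}-n-1}$, so that $\|f'\|_{L^{q}}/\|f\|_{L^{p}}\asymp n/(1-r)^{1+\frac{1}{p}-\frac{1}{q}}$, matching \eqref{eq:qgeqp}. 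When $q\geq p$ this test function only reaches an $n^{1}$ numerator, short of the required $n^{1+\frac{1}{p}-\frac{1}{q}}$; there I would use the rational function with $n$ equispaced poles on $\{|z|=1/r\}$ constructed in Theorem~\ref{TH_BE_Gen}(ii), or equivalently a product of a Dirichlet-type polynomial of degree $\sim n$ with a pole factor $(1-rz)^{-m}$, combining the Nikolskii factor $n^{\frac{1}{p}-\frac{1}{q}}$ with the pole-concentration factor $(1-r)^{-(1+\frac{1}{p}-\frac{1}{q})}$.

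The most delicate step is the upper bound for $q\geq p$. Two natural simplifications both fail: integrating \eqref{BEp} via Minkowski's inequality loses a factor $n^{1/p}$ (the triangle inequality over the pole sum is too crude for $L^{p}$ control), and plain Riesz--Thorin between $q=1$ and $q=\infty$ produces the wrong $n$-exponent $1+\frac{1}{p}-\frac{1}{pq}$ in place of $1+\frac{1}{p}-\frac{1}{q}$. The fix is to interpolate between $q=p$ and $q=\infty$, which requires first establishing the sharp $L^{p}$-Bernstein inequality $\|f'\|_{L^{p}}\leq(1+r)\frac{n}{1-r}\|f\|_{L^{p}}$ directly from the kernel of $D$ on $K_{B_{\sigma}}$ (rather than by brute $L^{p}$-integration of \eqref{BEp}), after which Riesz--Thorin yields the sharp constant $(1+r)^{1+\frac{1}{p}-\frac{1}{q}}$. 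Verifying sharpness of the test function in the $q\geq p$ regime is the second fussy point, as it requires tracking both the $L^{p}$-spread and the $L^{q}$-concentration of a multi-pole product and checking that the peaks of the individual pole contributions have essentially disjoint supports of measure $\asymp 1-r$.
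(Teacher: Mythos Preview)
Your upper-bound strategy coincides with the paper's: the endpoints $q=\infty$ and $q=1$ come from Theorems~\ref{TH_BE_Gen} and~\ref{ThGenSpLem} exactly as you say; the diagonal case $p=q$ is established directly from the integral representation \eqref{integrepder} (the paper does this by applying H\"older with weight $|k_\xi^{B_2}|^2+|k_\xi^{\widetilde B_1}|^2$ inside the integral and then Fubini, rather than by abstract Riesz--Thorin, but the outcome is the same constant $n(1+r)/(1-r)$); and then the ranges $q\le p$ and $q\ge p$ are filled in by interpolating, respectively, between $(q=1,\,q=p)$ and between $(q=p,\,q=\infty)$ via the log-convexity of $\|f'\|_{L^q}$. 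Your diagnosis that interpolating only between $q=1$ and $q=\infty$ produces the wrong $n$-exponent, and that the diagonal estimate is the missing ingredient, is exactly the point of the paper's Step~3.

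For the lower bounds the approaches diverge. In the regime $q\le p$ your test function $(1-rz)^{-n}$ works and gives the same order as the paper's choice $h(z)=(1-rz)^{-1}b_r^{\,n-1}$; the latter has the minor advantage that $|h|$ on $\mathbb{T}$ is independent of $n$, which streamlines the computation.

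In the regime $q\ge p$ your description contains a misconception: the extremizer in Theorem~\ref{TH_BE_Gen}(ii) does \emph{not} have equispaced poles. The paper's test function is
\[
g(z)=b_{-r}'(z)\Big(\sum_{k=0}^{N}b_{-r}^{k}(z)\Big)^{2},\qquad N=\Big\lfloor \tfrac{n-2}{2}\Big\rfloor,
\]
the square of a Dirichlet kernel composed with $b_{-r}$, times the conformal Jacobian; all poles sit at the single point $-1/r$. The key device is the change of variable $\zeta=b_{-r}(\xi)$, which converts every $L^s$-norm of $g$ and $g'$ into a weighted integral of $D_{N+1}$ and $D_{N+1}D_{N+1}'$ with weights uniformly bounded above and below on an arc near $\zeta=1$; the classical asymptotics for Dirichlet kernels then give the correct order $(n/(1-r))^{1+\frac1p-\frac1q}$ uniformly in $n$ and $r$, with no disjoint-support hypothesis. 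Your equispaced-pole picture would require the side condition $n(1-r)\lesssim 1$ for the bumps to separate, and the alternative ``Dirichlet polynomial $\times(1-rz)^{-m}$'' runs into the degree constraint (forcing $m$ small) and does not by itself produce the $n^{\frac1p-\frac1q}$ factor. The Blaschke transplant is precisely what makes the two scales $n$ and $1/(1-r)$ interact correctly.
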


The upper bound for $\mathcal{C}_{n,\, r}\left(L^{q},\, L^{p}\right)$
is obviously sharp for the special case $p=q=\infty $, since it is
reached by the Blaschke product $b_{r}^{n}$. Moreover, it is proved
in \cite{Z1} that for any fixed $r$ in (0, 1), there exists a limit
\[
\lim_{n\rightarrow\infty}\frac{\mathcal{C}_{n,\, r}\left(L^{2},\, L^{2}\right)}{n}=\frac{1+r}{1-r},
\]
and thus, the bound $(1+r)^{1+\frac{1}{p}-\frac{1}{q}}$
is again (asymptotically as $n\rightarrow\infty$) sharp for 
$p=q=2.$ It is possible that this constant is sharp in
the general case $1\leq p,\, q\leq\infty$.

In Subsection \ref{ooo} we compare inequality \eqref{qgeqp_up}
with a theorem by K.M. Dyakonov \cite[Theorem 11]{Dya3}.
\medskip

%**********************************************************************

\subsection{An extension of S.M. Nikolskii's inequality to rational functions}
Direct analogs (with the dependence on $n$ only) of (\ref{Nik2}) do not exist
for functions in $\mathcal{R}_{n}$. As always, it is easy to check this
fact considering the test function $f(z)  = (1-rz)^{-1}$ as $r$ tends
to $1^{-}$. A natural extension of (\ref{Nik2}) for functions in 
$\mathcal{R}_{n,\,r}$ can be stated as follows: 

\begin{thm}
\label{ThNikGen} 
Let $1\leq p<q\leq\infty$, $n\geq1$ and $r\in(0,\,1).$

$({\rm i})$ We have  
\begin{equation}
\left\Vert f\right\Vert _{L^{q}}\leq
\left(\frac{1+r}{1-r}\right)^{\frac{1}{p}-\frac{1}{q}}
\left(n_1^{\frac{1}{p}-\frac{1}{q}}+(n_2+1)^{\frac{1}{p}-\frac{1}{q}}\right)
\left\Vert f\right\Vert _{L^{p}},\:\:\qquad 
f\in\mathcal{R}_{n,\, r},\label{inequ1}
\end{equation}
where $n_1$ \textup(respectively, $n_2$\textup) 
is the number of poles of $f$ inside $\mathbb{D}$
\textup(respectively, outside $\overline{\mathbb{D}}$\textup). In particular, 
\begin{equation}
\label{inequ2}
\left\Vert f\right\Vert _{L^{q}}\lesssim\left(\frac{n}{1-r}\right)^{\frac{1}{p}-\frac{1}{q}}\left\Vert f\right\Vert _{L^{p}},\:\:\qquad 
f\in\mathcal{R}_{n,\, r},
\end{equation}
with a constant depending only on $p$ and $q$.

$({\rm ii})$ The inequality \textup{(\ref{inequ2})} is sharp: 
for $1\le p<q\leq\infty$ there exists a constant $c(p,q)>0$
such that for any $r\in(0,\,1)$ and $n\geq2$ 
there exists $f\in\mathcal{R}_{n,\, r}$ with the property
\[
\frac{\left\Vert f\right\Vert _{L^{q}}}{\left\Vert f\right\Vert _{L^{p}}}
\ge c(p,q) \left(\frac{n}{1-r}\right)^{\frac{1}{p}-\frac{1}{q}}.
\]
\end{thm}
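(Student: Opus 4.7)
My strategy for Part (i) is to reduce everything to a Nikolskii-type inequality on a single model space. Let $B_1=\prod_{j=1}^{n_1}b_{a_j}$ be the finite Blaschke product of degree $n_1$ whose zeros are exactly the poles of $f$ inside $\mathbb{D}$; since $|B_1|\equiv 1$ on $\mathbb{T}$, we have $\|B_1 f\|_{L^p}=\|f\|_{L^p}$ for every $p\in[1,\infty]$. A partial-fractions calculation shows that $g:=B_1 f$ is a rational function of degree at most $n$ whose poles all lie in $\{|z|\geq 1/r\}$: they are exactly $\{\alpha_k\}\cup\{1/\overline{a_j}\}$. Rewriting the denominator of $g$ as a reflected polynomial, one verifies that $g\in K_{zB}$, where $B$ is the Blaschke product of degree $n$ with zeros $\{a_j\}\cup\{1/\overline{\alpha_k}\}\subset\{|z|\leq r\}$.

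I would then invoke a Nikolskii-type inequality for model spaces: for any finite Blaschke product $\Theta$ and any $h\in K_\Theta$,
\[
\|h\|_{L^q}\leq c_{p,q}\,\|\Theta'\|_{L^\infty}^{1/p-1/q}\,\|h\|_{L^p}, \qquad 1\leq p<q\leq\infty.
\]
This follows by combining the reproducing-kernel bound $|h(\xi)|\leq\|h\|_{L^2}|\Theta'(\xi)|^{1/2}$ on $\mathbb{T}$ (using $\|k_\xi^\Theta\|_{L^2}^2=|\Theta'(\xi)|$), which yields the $L^\infty$--$L^2$ endpoint, together with its dual $L^2$--$L^1$ estimate and Riesz--Thorin interpolation. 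Applying this with $\Theta=zB$ and $h=g$, and using the bound $|(zB)'(\xi)|\leq 1+|B'(\xi)|\leq (n+1)(1+r)/(1-r)$ on $\mathbb{T}$ (since the zeros of $B$ all have modulus at most $r$), one immediately recovers \eqref{inequ2}. To obtain the sharper \eqref{inequ1}, I would use the factorization $zB=B_a B_b$ with $B_a=\prod_j b_{a_j}$ of degree $n_1$ and $B_b=z\prod_k b_{1/\overline{\alpha_k}}$ of degree $n_2+1$, and the orthogonal decomposition $K_{zB}=K_{B_a}\oplus B_a K_{B_b}$. Writing $g=g_1+B_a g_2$ with $g_1\in K_{B_a}$, $g_2\in K_{B_b}$, and applying the model-space Nikolskii separately to $g_1$ and $g_2$ (with $\|B_a'\|_{L^\infty}\leq n_1(1+r)/(1-r)$ and $\|B_b'\|_{L^\infty}\leq(n_2+1)(1+r)/(1-r)$), combined with the isometry $\|B_a g_2\|_{L^p}=\|g_2\|_{L^p}$, yields \eqref{inequ1} once $\|g_j\|_{L^p}$ is controlled by $\|g\|_{L^p}$.

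For the sharpness in Part (ii), I would construct an extremal $f\in\mathcal{R}_{n,\,r}$ concentrated on $\mathbb{T}$ at scale $\sim(1-r)/n$. A natural choice is a renormalized reproducing kernel $k_{\xi_0}^\Theta$ at a point $\xi_0\in\mathbb{T}$, for $\Theta$ the Blaschke product of degree $n$ with zeros clustered on $\{|z|=r\}$ near the reflection of $\xi_0$. The resulting rational function has an explicit form, and a direct computation of its $L^p$- and $L^q$-norms yields a lower bound of the order $(n/(1-r))^{1/p-1/q}$ for the ratio $\|f\|_{L^q}/\|f\|_{L^p}$.

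The main obstacle I anticipate is the Nikolskii-type inequality for model spaces with the sharp dependence $\|\Theta'\|_{L^\infty}^{1/p-1/q}$. The endpoint $L^2$--$L^1$ estimate, obtained by dualizing the $L^\infty$--$L^2$ estimate, is delicate because it requires an appropriately quantitative $L^\infty$-boundedness of the projection $P_{K_\Theta}$. A secondary difficulty is the step from \eqref{inequ2} to \eqref{inequ1}: although the decomposition $g=g_1+B_a g_2$ is orthogonal in $L^2$, controlling $\|g_j\|_{L^p}$ by $\|g\|_{L^p}$ for general $p$ requires an additional argument, for which I would interpolate between the easy $p=2$ case and the endpoint estimates recovered directly from the $K_{zB}$-reduction.
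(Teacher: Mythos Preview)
Your reduction $g=B_1 f\in K_{zB}$ is a clean alternative to the paper's route and does yield \eqref{inequ2}. The paper proceeds differently: it keeps $f$ intact and uses an integral representation $f(\xi)=\langle f,\phi_\xi\rangle$ with $\phi_\xi=k_\xi^{zB_2}+\xi\,\overline{u\,k_\xi^{\widetilde B_1}}$, a sum of two reproducing kernels attached respectively to the poles outside and inside $\mathbb{D}$. From there both arguments are the same: apply H\"older, bound $\|k_\xi\|_{L^{p'}}$ via $\|k_\xi\|_{L^{p'}}^{p'}\le\|k_\xi\|_{L^\infty}^{p'-2}\|k_\xi\|_{L^2}^2$ for $p'\ge 2$, then interpolate $\|f\|_{L^q}^q\le\|f\|_{L^\infty}^{q-p}\|f\|_{L^p}^p$; the range $p>2$ is handled by duality on the finite-dimensional space of rational functions with prescribed poles. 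Your proposed Riesz--Thorin between the $(L^\infty,L^2)$ and $(L^2,L^1)$ endpoints only covers the line $\tfrac1q=\tfrac1p-\tfrac12$, so you will in any case need those two extra steps; but none of this requires projection bounds---the ``obstacle'' you anticipate is not where the difficulty lies.

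The genuine gap is in your route to the \emph{refined} constant \eqref{inequ1}. Splitting the \emph{function} as $g=g_1+B_a g_2$ forces you to control $\|g_j\|_{L^p}$ by $\|g\|_{L^p}$, which for $p\ne 2$ is exactly $L^p$-boundedness of $P_{K_{B_a}}$; this holds for $1<p<\infty$ but with a $p$-dependent constant, so the exact form of \eqref{inequ1} is lost. The paper sidesteps this entirely because it is the \emph{kernel} that splits: the triangle inequality $\|\phi_\xi\|_{L^{p'}}\le\|k_\xi^{zB_2}\|_{L^{p'}}+\|k_\xi^{\widetilde B_1}\|_{L^{p'}}$ costs nothing and produces the $n_1^{1/p}+(n_2+1)^{1/p}$ term directly. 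In your framework the analogous device is the identity $k_\xi^{zB}=k_\xi^{B_a}+\overline{B_a(\xi)}\,B_a\,k_\xi^{B_b}$, which gives the same split of the kernel norm for free; use this instead of decomposing $g$.

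For part (ii), a bare reproducing kernel $k_{\xi_0}^{\Theta}$ will \emph{not} give sharpness when $p=1$. After the change of variable $w=b_{-r}(z)$ it becomes (up to a bounded factor) a Dirichlet kernel $D_n$, and $\|D_n\|_{L^1}\asymp\log n$, so the ratio $\|k\|_{L^q}/\|k\|_{L^1}$ falls short of $(n/(1-r))^{1-1/q}$ by a logarithm. The paper's test function $g=b_{-r}'\,\bigl(\sum_{k=0}^{N}b_{-r}^k\bigr)^{2}$ is essentially the \emph{square} of such a kernel (times the Jacobian), which makes $\|g\|_{L^1}=\|D_{N+1}\|_{L^2}^2\asymp n$ and yields the correct order for every $1\le p<q\le\infty$.
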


\begin{rem*}
It is possible that for any $1\leq p<q\leq\infty$
the upper bound $(1+r)^{\frac{1}{p}-\frac{1}{q}}$ in (\ref{inequ1}) 
is asymptotically sharp as $n$ tends to infinity, for any fixed $r\in(0,\,1)$. 
We are able to provide a simple proof of this fact for the special case
$q=\infty,$ $2\leq p<\infty$. Indeed, using the test function
$f=\frac{1}{1+rz}\sum_{k=0}^{n-1}b_{-r}^{k}$, $r\in(0,\,1),$ we
clearly have $\left\Vert f\right\Vert _{L^{2}}^2=\frac{n}{1-r^{2}}$,
since the family $\left\{\frac{1}{1+rz}b_{-r}^{k}\right\}$ is orthogonal
in $L^{2}$. Moreover, $\left\Vert f\right\Vert _{L^{\infty}}=f(-1)=\frac{n}{1-r}$
and thus 
\[
\left\Vert f\right\Vert _{L^{p}}^{p} \leq  
\left\Vert f\right\Vert _{L^{\infty}}^{p-2}\left\Vert f\right\Vert _{L^{2}}^{2}
= \frac{1}{1+r}\left(\frac{n}{1-r}\right)^{p-1},\qquad 2\leq p<\infty .
\]
As a consequence, 
\[
\frac{\left\Vert f\right\Vert _{L^{\infty}}}{\left\Vert f\right\Vert _{L^{p}}}
 \geq \left(n \frac{1+r}{1-r}\right)^{\frac{1}{p}}, \qquad 2\leq p<\infty ,
\]
which gives the result since the poles of $f$ are all outside 
$\frac{1}{r}\mathbb{D}$, $n_2=n=\deg f$. 
\end{rem*}
\medskip

%**********************************************************************
%**********************************************************************

\section{\label{Sec:Integral-representations-for}Integral representations
for rational functions and their derivatives}

\subsection{\label{Subs_assump}Preliminaries} 
In what follows we may assume without loss of generality 
that the functions $f=\frac{P}{Q}\in\mathcal{R}_{n}$ we consider
are such that: 
\begin{enumerate}
 \item $\max\left(\deg\, P,\,\deg\, Q\right)=n,$ (otherwise $f\in\mathcal{R}_{m},\, m<n$), 
 \item $\deg\, P\leq\deg\, Q$ : indeed, if $n=\deg\, P>l=\deg\, Q$ and
$\xi\in\mathbb{T},$ then we define $g\left(\xi\right)=f\left(\overline{\xi}\right)$
and supposing that $P(z)=\sum_{k=0}^{n}a_{k}z^{k},$ $a_{n}\neq0$
and $Q(z)=\sum_{k=0}^{l}b_{k}z^{k},$ $b_{l}\neq0,$ we obtain (multiplying
by $\xi^{n}$) 
$$
g\left(\xi\right)  =\frac{\sum_{k=0}^{n}a_{k}
\xi^{-k}}{\sum_{k=0}^{l}b_{k}\xi^{-k}}
=  \frac{\sum_{k=0}^{n}a_{k}\xi^{n-k}}{\sum_{k=0}^{l}b_{k}\xi^{n-k}}=\frac{\sum_{j=0}^{n}a_{n-j}\xi^{j}}{\sum_{j=n-l}^{n}b_{n-j}\xi^{j}}=\frac{\widetilde{P}\left(\xi\right)}{\widetilde{Q}\left(\xi\right)},
$$
where $\widetilde{P}(z)=\sum_{j=0}^{n}a_{n-j}z^{j}$ and 
$\widetilde{Q}(z)=\sum_{j=n-l}^{n}b_{n-j}z^{j}$
are such that $\widetilde{P},\,\widetilde{Q}\in\mathcal{P}_{n}$ and
$\deg\,\widetilde{P}\leq\deg\,\widetilde{Q}.$ Moreover, we clearly
have $\left|g'\left(\xi\right)\right|=
\left|f'\left(\overline{\xi}\right)\right|$ for all $\xi\in\mathbb{T}$,
$\left\Vert g\right\Vert _{L^{p}}=\left\Vert f\right\Vert _{L^{p}}$, $1\leq p\leq\infty$,
and $\left\Vert g'\right\Vert _{L^{q}}=\left\Vert f'\right\Vert _{L^{q}}$,
$1\leq q\leq\infty$,
since $g'\left(\xi\right)=-\frac{1}{\xi^{2}}f'\left(\overline{\xi}\right),\;\xi\in\mathbb{T},$
and
 \item all the poles of $f$ (i.e., the zeros of $Q$) are pairwise distinct:
indeed, we can assume this perturbing slightly the poles of $f$ and
the result will follow by continuity. 
\end{enumerate}

From now on, for every function $f\in\mathcal{R}_{n}$ we will denote
by $\sigma_1$ and $\sigma_2$
the sets of poles of $f$ 
(repeated counting multiplicities) which are respectively inside $\mathbb{D}$ or 
outside $\overline{\mathbb{D}}$,
\[
\sigma_1 = \left\{ \overline{\lambda_{1}},\,\overline{\lambda_{2}},\,
\dots,\,\overline{\lambda_{n_1}}\right\}, 
\qquad 
\sigma_2 = \left\{ 1/\overline{\mu_{1}},\,1/\overline{\mu_{2}},\,\dots,
\,1/\overline{\mu_{n_2}}\right\}. 
\]
Also, we will denote by 
\[
B_{1}=\prod_{j=1}^{n_1}b_{\lambda_{j}}, \quad\widetilde{B}_{1}=
\prod_{j=1}^{n_1}b_{\overline{\lambda_{j}}}\quad \mbox{and}\quad 
B_{2}=\prod_{j=1}^{n_2}b_{\mu_{j}},
\]
the corresponding finite Blaschke products and by $k_{\xi}^{B_{1}},$
$k_{\xi}^{\widetilde{B}_{1}}$, $k_{\xi}^{B_{2}}$ the reproducing
kernels at the point $\xi$ of the corresponding model spaces. 
Under the assumption (3), $f$ can be written as  
\begin{equation}
\label{simplform}
f(\xi)=a+\sum_{k=1}^{n_1}\frac{c_{k}}{\xi-\overline{\lambda_{k}}}+
\sum_{k=1}^{n_2}\frac{d_{k}}{1-\overline{\mu_{k}}\xi},
\qquad c_{k},\, d_{k}\in\mathbb{C},
\end{equation}
where $n=n_1+n_2 =\deg\, f$ ($a=0$ if and only if $\deg\, P<\deg\, Q)$. We put 
$g(\xi)=\sum_{k=1}^{n_1}\frac{c_{k}}{\xi-\overline{\lambda_{k}}}$ and
$h(\xi)=\sum_{k=1}^{n_2}\frac{d_{k}}{1-\overline{\mu_{k}}\xi},$ so
that $f=a+g+h.$ We denote by $V_{\sigma_{1},\,\sigma_{2}}$ the vector
space of all functions of the form \eqref{simplform}.
\medskip

%**********************************************************************

\subsection{\label{Subs:Int-rep-for-funct}Integral representation for rational
functions on the unit circle}

We first obtain an integral representation for a function $f\in\mathcal{R}_{n}$. 

\begin{lem}
\label{lemma1} Keeping the notations and assumptions 1 and 2 of Subsection
\ref{Subs_assump}, for any function $f\in\mathcal{R}_{n}$ we have
\begin{equation}
f(\xi)=\left\langle f,\,\phi_{\xi}\right\rangle ,\qquad|\xi|=1,\label{integrep}
\end{equation}
where $\phi_{\xi}(u)=k_{\xi}^{zB_{2}}(u)+\xi\overline{uk_{\xi}^{\widetilde{B_{1}}}(u)}$,
$u,\,\xi\in\mathbb{T}$,
and $\left\langle \cdot,\,\cdot\right\rangle =\left\langle \cdot,\,\cdot\right\rangle _{L^{2}}$
stands for the scalar product in $L^{2}=L^{2}(\mathbb{T}, m).
$\end{lem}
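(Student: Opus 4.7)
The plan is to prove the identity by splitting both $f$ and $\phi_\xi$ into their ``analytic'' and ``anti-analytic'' parts on the circle and matching them via the reproducing kernel property in two different model spaces.

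First I would use the partial fraction decomposition $f = (a+h) + g$ from \eqref{simplform} and observe that $a+h$ and $g$ sit on opposite sides of the orthogonal decomposition $L^2(\mathbb{T}) = H^2 \oplus \overline{zH^2}$. Indeed, $h \in K_{B_2}$ (it is $P/Q$ with $\deg P \le n_2-1$ and $Q=\prod(1-\overline{\mu_k}u)$), and since $zB_2(0)=0$ one checks that constants lie in $K_{zB_2}$; combined with the inclusion $K_{B_2}\subset K_{zB_2}$, this gives $a+h\in K_{zB_2}\subset H^2$. For $g$, expanding each term geometrically as $\frac{c_k}{u-\overline{\lambda_k}}=\sum_{j\ge 0}c_k\overline{\lambda_k}^{\,j}u^{-j-1}$ shows $g\in\overline{zH^2}$. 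On the kernel side, $k_\xi^{zB_2}\in K_{zB_2}\subset H^2$, while $uk_\xi^{\widetilde{B_1}}(u)$ vanishes at $0$ and thus lies in $zH^2$, so $\overline{uk_\xi^{\widetilde{B_1}}(u)}\in\overline{zH^2}$.

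By orthogonality of $H^2$ and $\overline{zH^2}$, the two ``cross terms'' vanish, leaving
\[
\langle f,\phi_\xi\rangle
=\langle a+h,\,k_\xi^{zB_2}\rangle
+\big\langle g,\,\xi\overline{uk_\xi^{\widetilde{B_1}}(u)}\big\rangle .
\]
The first term equals $(a+h)(\xi)$ by the reproducing kernel property in $K_{zB_2}$. For the second, the key step is to recognize that, on $\mathbb{T}$, the function $G(u):=\overline{g(u)}/u$ is a \emph{legitimate} element of $K_{\widetilde{B_1}}$: computing $\overline{1/(u-\overline{\lambda_k})}=u/(1-\lambda_k u)$ on $\mathbb{T}$ gives $G(u)=\sum_k \overline{c_k}/(1-\lambda_k u)$, which has denominator $\prod_k(1-\lambda_k u)$ (the denominator associated with $\widetilde{B_1}$) and numerator of degree at most $n_1-1$. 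Since $g(u)u=\overline{G(u)}$ on $\mathbb{T}$, a direct calculation yields
\[
\big\langle g,\,\xi\overline{uk_\xi^{\widetilde{B_1}}(u)}\big\rangle
=\overline{\xi}\int_{\mathbb{T}}\overline{G(u)}\,k_\xi^{\widetilde{B_1}}(u)\,dm(u)
=\overline{\xi}\cdot\overline{G(\xi)}
=\overline{\xi}\cdot\frac{g(\xi)}{\overline{\xi}}=g(\xi),
\]
using the reproducing kernel identity $\langle G,k_\xi^{\widetilde{B_1}}\rangle=G(\xi)$. Adding the two contributions gives $f(\xi)=(a+h)(\xi)+g(\xi)$, as desired.

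The main obstacle, in my view, is the bookkeeping for the anti-analytic piece: one has to discover the right transformation $g\mapsto G=\overline{g}/u$ that turns the pole configuration $\{\overline{\lambda_k}\}\subset\mathbb{D}$ into the configuration $\{1/\lambda_k\}$ associated with the \emph{conjugate} Blaschke product $\widetilde{B_1}$, and then track the $\xi$ and $\overline{\xi}$ factors carefully so that the prefactor $\xi$ in the definition of $\phi_\xi$ cancels exactly the $\overline{\xi}$ coming out of the inner product. Once this is set up, everything else is a straightforward application of orthogonality and of the two reproducing kernel formulas.
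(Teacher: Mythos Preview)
Your proof is correct and follows essentially the same strategy as the paper: split $f=(a+h)+g$ along $H^2\oplus\overline{zH^2}$, use the reproducing kernel of $K_{zB_2}$ for $a+h$, and transform $g$ into an element of a model space associated with the conjugate Blaschke product to handle the anti-analytic part, with orthogonality killing the cross terms. The only difference is cosmetic: the paper passes through $\varphi(\zeta)=\zeta^{-1}g(1/\zeta)\in K_{B_1}$ and then performs the change of variable $v=\bar u$ to land in $K_{\widetilde{B_1}}$, whereas you go directly to $G(u)=\overline{g(u)}/u\in K_{\widetilde{B_1}}$, which is a slightly cleaner bookkeeping of the same computation.
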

\begin{proof}
Without loss of generality, we can suppose that $f$ satisfies assumption
3 of Subsection \ref{Subs_assump} for the same reason of continuity.
Thus, $f$ satisfies the above formula (\ref{simplform}): 
\begin{equation}
f(\xi)=a+h(\xi)+g(\xi).\label{decomp}
\end{equation}
Clearly, $a+h\in K_{zB_{2}}$. Thus, for a fixed $\xi$ we have 
\[
a+h(\xi)=\big<a+h,\, k_{\xi}^{zB_{2}}\big>.
\]
Moreover, since $g\in\overline{H_{0}^{2}}$ (where 
$H_{0}^{2}$ stands for the subspace
of $H^{2}$ consisting of functions $f$ such that $f(0)=0$), $ $we
have 
\begin{equation}
a+h(\xi)=\big<a+g+h,\, k_{\xi}^{zB_{2}}\big>=\big<f,\, k_{\xi}^{zB_{2}}\big>,\qquad|\xi|\leq1,\label{reph}
\end{equation}
Note that by the continuity of the kernel $k_{\xi}^{zB_{2}}$
in $\overline{\mathbb{D}}\times\overline{\mathbb{D}}$, this formula
extends to $\xi\in\mathbb{T}$.

To obtain an analogous formula for $g$, consider the function 
\begin{equation}
\label{phi}
\varphi(\xi)=\frac{1}{\xi}g\left(\frac{1}{\xi}\right),
\end{equation}
which belongs to $K_{B_{1}}$ and as a consequence we can write,
for $|\xi|<1$, 
\begin{equation}
\label{vseo}
\frac{1}{\xi}g\left(\frac{1}{\xi}\right)=\big<\varphi,\, k_{\xi}^{B_{1}}\big>=\int_{\mathbb{T}}\varphi(u)\frac{1-\overline{B_{1}(u)}B_{1}(\xi)}{1-\overline{u}\xi}{\rm d}{m}(u).
\end{equation}
Now setting $w=\frac{1}{\xi},$ $|w|>1$, changing the variable $v=\bar{u}$
and using the fact that $\varphi(u)=\bar{u}g(\bar{u})=vg(v)$, $u\in\mathbb{T}$,
we get 
\begin{equation}
\label{repg1}
g(w) = \int_{\mathbb{T}}\frac{1}{u}g\left(\frac{1}{u}\right)\frac{1-\overline{B_{1}(u)}B_{1}\left(\frac{1}{w}\right)}{w-\overline{u}}{\rm d}{m}(u)
= \int_{\mathbb{T}}g(v)v\frac{1-\overline{B_{1}(\overline{v})}B_{1}\left(\frac{1}{w}\right)}{w-v}{\rm d}{m}(v).
\end{equation}
Note also that (\ref{repg1}) holds for $|w|>1$ and, by continuity,
also for $|w|=1$. Now, for $|w|=1$,
\begin{equation}
\label{repg}
\begin{aligned}
g(w) & = \overline{w} \int_{\mathbb{T}}g(v)v\frac{1-\widetilde{B_{1}}(v)\overline{\widetilde{B_{1}}(w)}}{1-v\overline{w}}{\rm d}{m}(v) \\
     & = \overline{w} \big<g,\,\overline{zk_{w}^{\widetilde{B_{1}}}}\big> = 
         \big<f,\, w\overline{zk_{w}^{\widetilde{B_{1}}}}\big>,
\end{aligned}
\end{equation}
where the finite Blaschke product $\widetilde{B_{1}}$ is defined
in Subsection \ref{Subs_assump} (we have $B_{1}(\overline{v})=\overline{\widetilde{B_{1}}(v)}$,
$|v|=1$), and the last equality is due to the fact that the function
$\overline{zk_{w}^{\widetilde{B_{1}}}}$ belongs to $\overline{H_{0}^{2}}$
and thus, is orthogonal to $a+h.$ Now, combining (\ref{reph}) and
(\ref{repg}), we obtain for any $\xi\in\mathbb{T}:$ 
\[
f(\xi)  = a+ h(\xi)+g(\xi)  =  
 \big<f,\, k_{\xi}^{zB_{2}}+\xi\overline{zk_{\xi}^{\widetilde{B_{1}}}}\big>,
\]
which completes the proof. 
\end{proof}

%************************************************************************

\subsection{\label{Subs:Int-rep-for-deriv-of-funct}Integral representation for
the derivative of rational functions on the unit circle}

The integral representation in this subsection
essentially coincides with the representation due to X. Li
\cite[Lemma 3]{Li} (for the case of rational functions
without poles in $\overline{\mathbb{D}}$
it was proved by R.~Jones, X.~Li, R.N.~Mohapatra and 
R.S.~Rodriguez \cite[Lemma 4.3]{JLMR}).
It is possible to reinterpret the proof of Li's Lemma 
using the theory of model spaces. 
 
\begin{lem}
\label{lemma2} Keeping the notations and assumptions 1 and 2 of Subsection
\ref{Subs_assump}, for any function $f\in\mathcal{R}_{n}$, 
\begin{equation}
f'(\xi)=\left\langle f,\,\psi_{\xi}\right\rangle, \qquad |\xi|=1,\label{integrepder}
\end{equation}
where $\psi_{\xi}(u)=u\left(k_{\xi}^{B_{2}}(u)\right)^{2}-
\xi^{2}\overline{u}\overline{\left(k_{\xi}^{\widetilde{B}_{1}}(u)\right)^{2}}$,
$u,\,\xi\in\mathbb{T}$. 
\end{lem}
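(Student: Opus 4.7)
The plan is to mirror the proof of Lemma \ref{lemma1}: I will use the decomposition $f = a + h + g$ recalled in Subsection \ref{Subs_assump} (with $a \in \mathbb{C}$, $h = \sum_k d_k/(1-\overline{\mu_k} u) \in K_{B_2}$, and $g = \sum_k c_k/(u-\overline{\lambda_k}) \in \overline{H_0^2}$), split $\psi_\xi$ into its two summands, and match each with one side of the identity $f'(\xi) = h'(\xi) + g'(\xi)$. A preliminary step rules out the ``wrong'' pairings. The function $u(k_\xi^{B_2}(u))^2$ lies in $H^\infty \cap H_0^2$: the singularity at $u = \xi$ is removable because $1 - \overline{B_2(\xi)}B_2(u)$ vanishes at $u = \xi$ when $|\xi|=1$, and the factor $u$ forces the value at $0$ to be zero; hence it is $L^2$-orthogonal both to constants and to $g\in\overline{H_0^2}$. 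Symmetrically, $\xi^{2}\overline{u}\,\overline{(k_\xi^{\widetilde{B}_1}(u))^2}$ lies in $\overline{H_0^2}$ and is orthogonal to $a+h\in H^\infty$. It will thus remain to prove
\[
\langle h,\,u(k_\xi^{B_2}(u))^2\rangle = h'(\xi) \quad\text{and}\quad -\overline{\xi}^{\,2}\int g(u)\,u(k_\xi^{\widetilde{B}_1}(u))^2\,dm(u) = g'(\xi).
\]

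For the first identity I will start from the reproducing formula $h(\eta) = \langle h,\,k_\eta^{B_2}\rangle$ valid for $\eta\in\mathbb{D}$. Since $k_\eta^{B_2}$ depends antiholomorphically on $\eta$, differentiation under the integral yields $h'(\eta) = \langle h,\,\partial_{\bar\eta} k_\eta^{B_2}\rangle$. A direct algebraic manipulation of the explicit formula $k_\eta^{B_2}(u) = (1-\overline{B_2(\eta)}B_2(u))/(1-\overline{\eta}\,u)$ then shows
\[
u(k_\eta^{B_2}(u))^2 - \partial_{\bar\eta} k_\eta^{B_2}(u) = B_2(u)\cdot\Phi(u)
\]
for some $\Phi\in H^2$ (for $\eta\in\mathbb{D}$, the denominator $(1-\overline{\eta}u)^2$ has its only pole $1/\overline{\eta}$ outside $\overline{\mathbb{D}}$). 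This difference thus lies in $B_2 H^2 = K_{B_2}^\perp$, so its inner product with $h\in K_{B_2}$ vanishes, giving $\langle h,\,u(k_\eta^{B_2})^2\rangle = h'(\eta)$ for $\eta\in\mathbb{D}$; continuity (everything is rational in $\eta$) then extends the identity to $\eta=\xi\in\mathbb{T}$.

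For the second identity I will use residue calculus. After the $u/u$ cancellation with $dm(u) = du/(2\pi i u)$, the integral becomes $(2\pi i)^{-1}\oint_{\mathbb{T}} g(u)(k_\xi^{\widetilde{B}_1}(u))^2\,du$. The integrand is meromorphic on $\mathbb{C}$: the apparent double pole of $(k_\xi^{\widetilde{B}_1})^2$ at $u=\xi$ is removable (the numerator $1-\overline{\widetilde{B}_1(\xi)}\widetilde{B}_1(u)$ has a double zero at $u=\xi$ on $\mathbb{T}$), the poles inherited from $\widetilde{B}_1$ lie outside $\overline{\mathbb{D}}$, and the only residues inside $\mathbb{D}$ come from the simple poles of $g$ at $\overline{\lambda_k}$. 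The decisive observation is that $\widetilde{B}_1(\overline{\lambda_k}) = 0$---because $\widetilde{B}_1 = \prod b_{\overline{\lambda_j}}$ has zeros precisely at $\{\overline{\lambda_k}\}$---which simplifies $k_\xi^{\widetilde{B}_1}(\overline{\lambda_k}) = 1/(1-\overline{\xi}\,\overline{\lambda_k})$. Summing the residues at the $\overline{\lambda_k}$ and using $1-\overline{\xi}\,\overline{\lambda_k} = \overline{\xi}(\xi-\overline{\lambda_k})$ on $\mathbb{T}$, the integral evaluates to $-\xi^2 g'(\xi)$; multiplication by $-\overline{\xi}^{\,2}$ then recovers $g'(\xi)$.

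The main obstacle will be the algebraic computation in the $h$-part---explicitly extracting $B_2(u)$ as a factor of $u(k_\eta^{B_2}(u))^2-\partial_{\bar\eta}k_\eta^{B_2}(u)$---which requires some careful kernel manipulation, though it is ultimately a bookkeeping exercise. The $g$-part, by contrast, is conceptually transparent once one recognizes that $\widetilde{B}_1$ is designed to vanish at the poles of $g$; this is precisely the reason $\widetilde{B}_1$ rather than $B_1$ appears in the conjugate term of $\psi_\xi$.
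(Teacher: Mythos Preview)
Your proof is correct. For the $h$-part you essentially do what the paper does: the paper writes $h'(\xi) = \langle h,\, z/(1-\overline{\xi}z)^2\rangle$ via the Cauchy formula and then observes that $z/(1-\overline{\xi}z)^2 - z(k_\xi^{B_2})^2 \in B_2 H^2$, while you differentiate the reproducing identity $h(\eta) = \langle h,\, k_\eta^{B_2}\rangle$ instead; the algebraic verification that the relevant difference is divisible by $B_2$ is the same in spirit. The $g$-part, however, is handled genuinely differently. The paper introduces $\varphi(u) = u^{-1}g(1/u) \in K_{B_1}$, applies the reproducing formula for $K_{B_1}$, differentiates, shows that a certain combination lies in $B_1 H^2$, and then performs the changes of variable $v = \bar u$ and $w = 1/\xi$ to transport the result back to $g$; this keeps the treatment of $g$ structurally parallel to that of $h$ and stays entirely within the model-space framework. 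Your residue computation is more direct: the observation that $\widetilde{B}_1(\overline{\lambda_k}) = 0$ collapses $k_\xi^{\widetilde{B}_1}$ to $(1-\overline{\xi}u)^{-1}$ at each pole of $g$ is precisely why $\widetilde{B}_1$ (rather than $B_1$) must appear in $\psi_\xi$, and your argument makes this mechanism explicit. The paper's route generalizes more readily to general model spaces $K_\Theta$ (where residue calculus is unavailable); yours is shorter and avoids both changes of variable. One minor slip: the numerator $1-\overline{\widetilde{B}_1(\xi)}\widetilde{B}_1(u)$ has a \emph{simple} zero at $u=\xi$, not a double one---it is the square $(k_\xi^{\widetilde{B}_1})^2$ whose numerator vanishes to order two, cancelling the double pole of $(1-\overline{\xi}u)^{-2}$.
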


\begin{proof}
The scheme of the proof repeats the one of Lemma \ref{lemma1}. We
use again the fact that $f$ can be written as in (\ref{simplform}).
This time, we notice first that $h\in K_{B_{2}}$. Then for a fixed
$\xi$, we have 
\[
h'(\xi)=\bigg<h,\,\frac{z}{(1-\overline{\xi}z)^{2}}\bigg>=\big<h,\, z\left(k_{\xi}^{B_{2}}\right)^{2}\big>.
\]
Here the first equality is the standard Cauchy formula, while the
second follows from the fact that $z(1-\overline{\xi}z)^{-2}-
z\left(k_{\xi}^{B_{2}}(z)\right)^{2} \in B_{2}H^{2}$ and $h\perp B_{2}H^{2}$.
Thus, for the case when all poles are outside the disc
$\overline{\mathbb{D}}$, the formula is immediate.
Moreover, since $g\in\overline{H_{0}^{2}}$ (where $\overline{H_{0}^{2}}$
is defined above in the proof of Lemma \ref{lemma1}) we have 
\begin{equation}
h'(\xi)=\big<g+h,\, z\left(k_{\xi}^{B_{2}}\right)^{2}\big>=\big<f,\, z\left(k_{\xi}^{B_{2}}\right)^{2}\big>,\qquad|\xi|\leq1,\label{1}
\end{equation}
Again, by the continuity of the kernel 
in $\overline{\mathbb{D}}\times\overline{\mathbb{D}}$, this formula
extends to $\xi\in\mathbb{T}$.

To obtain an analogous formula for $g'$, consider the function $\varphi \in K_{B_{1}}$
defined by (\ref{phi}). Now setting in \eqref{vseo} 
$w=\frac{1}{\xi}$, $|w|>1$, we get 
\[
g(w)=\int_{\mathbb{T}}\varphi(u)\frac{1-\overline{B_{1}(u)}B_{1}\left(\frac{1}{w}\right)}{w-\overline{u}}{\rm d}{m}(u).
\]
Then, differentiating with respect to $w$, we obtain 
\[
g'(w)=\int_{\mathbb{T}}\varphi(u)\bigg(\frac{1-\overline{B_{1}(u)}B_{1}\left(\frac{1}{w}\right)}{w-\overline{u}}\bigg)'_{w}{\rm d}{m}(u)=-\int_{\mathbb{T}}\varphi(u)\bigg(\frac{1-\overline{B_{1}(u)}B_{1}\left(\frac{1}{w}\right)}{w-\overline{u}}\bigg)^{2}{\rm d}{m}(u),
\]
since, by a direct computation, 
\[
\bigg(\frac{1-\overline{B_{1}(u)}B_{1}\left(\frac{1}{w}\right)}{w-\overline{u}}\bigg)'_{w}+\bigg(\frac{1-\overline{B_{1}(u)}B_{1}\left(\frac{1}{w}\right)}{w-\overline{u}}\bigg)^{2}\in B_{1}H^{2}
\]
(as a function of $u$), while $\varphi\perp B_{1}H^{2}$. Changing
the variable $v=\bar{u}$ and using that $\varphi(u)=\bar{u}g(\bar{u})=vg(v)$,
$u\in\mathbb{T}$, we get 
\[
g'(w)=-\int_{\mathbb{T}}g(v)v\bigg(\frac{1-\overline{B_{1}(\bar{v})}B_{1}\left(\frac{1}{w}\right)}{w-v}\bigg)^{2}{\rm d}{m}(v).
\]

Recalling that the finite Blaschke product $\widetilde{B_{1}}$ defined
in Subsection \ref{Subs_assump} satisfies $B_{1}(\overline{v})=\overline{\widetilde{B_{1}}(v)}$,
$|v|=1$, we have
\begin{equation}
\begin{aligned}g'(w) & =-\int_{\mathbb{T}}g(v)v\bigg(\frac{1-\widetilde{B_{1}}(v)\overline{\widetilde{B_{1}}(w)}}{w-v}\bigg)^{2}{\rm d}m(v)\\
 & =-\int_{\mathbb{T}}f(v)v\bigg(\frac{1-\widetilde{B_{1}}(v)\overline{\widetilde{B_{1}}(w)}}{w-v}\bigg)^{2}{\rm d}m(v).
\end{aligned}
\label{2}
\end{equation}
The last equality follows from the fact that $h\in H^{2}$, while
$v\Big(\frac{1-\widetilde{B_{1}}(v)\overline{\widetilde{B_{1}}(w)}}{w-v}\Big)^{2}\in H_{0}^{2}$.
Note also that (\ref{2}) holds for $|w|>1$ and, by continuity, also
for $|w|=1$.

Now, applying formulas (\ref{1})--(\ref{2}) to $z=w\in\mathbb{T}$
and recalling that $f=a+g+h,$ $a\in\mathbb{C}$, we conclude that:
\[
f'(\xi)  =\int_{\mathbb{T}}(h+g)(u) \overline{\psi_\zeta(u)}{\rm d}{m}(u)
         =\left\langle f,\,\psi_{\xi}\right\rangle,
\]
since $\psi_{\xi}$ is orthogonal to 1. 
\end{proof}

\begin{rem*}
The above integral representation immediately implies inequality 
(\ref{LR}) by Levin and Rusak. Indeed, 
$|f'(\xi)| \le \|\psi_\xi\|_{L^1} \|f\|_{L^\infty} 
= (|B_1'(\xi)| + |B_2'(\xi)|)\|f\|_{L^\infty}$
(see \eqref{fact} below). It is, however, unclear,
whether one can prove the Borwein--Erd\'elyi inequality \eqref{BE}
using the representation \eqref{integrepder}.
\end{rem*}

\bigskip

%********************************************************************
%********************************************************************

\section{Proofs of the upper bounds}

In this Section we prove the upper bounds from 
Section \ref{Sec:Main-results}, that is, inequalities (\ref{BEp}),
(\ref{GenSpLem}), (\ref{qgeqp_up}) and (\ref{inequ1}).
All these proofs are based on
\begin{enumerate}
  \item integral representations (\ref{integrep}) or (\ref{integrepder});
  \item estimates of $H^{p}$-norms of reproducing kernels $k_{\xi}^{B},$
        where $B$ is a finite Blaschke product.
\end{enumerate}
In particular, we will often use the fact that for any finite Blaschke
product $B = \prod_{j=1}^{d}b_{\nu_{j}}$, $\nu_{j}\in\mathbb{D}$,
and for any $\xi\in\mathbb{T},$ 
\begin{equation}
\label{fact}
\big\| k_{\xi}^{B}\big\|_{L^{2}}^{2}=\left|B'(\xi)\right| = 
\sum_{j=1}^d \frac{1-|\nu_j|^2}{|\xi-\nu_j|^2}.
\end{equation}
We will use here the assumptions and the notations of Subsection \ref{Subs_assump}.

%*****************************************************************************

\subsection{Proof of Theorem \ref{TH_BE_Gen}, inequality \eqref{BEp}}
We first assume $1\leq p<\infty $ and denote by $p'$ the conjugate
exponent for $p$. Applying the H\"older inequality to the identity 
(\ref{integrepder}), we obtain
\begin{equation}
\label{first_holder}
\begin{aligned}
|f'(\xi)| & \leq\|f\|_{L^{p}}\big\|\psi_{\xi}\big\|_{L^{p'}}\nonumber \\
 & \leq\|f\|_{L^{p}}\left(\big\|(k_{\xi}^{\widetilde{B}_{1}})^{2}\big\|_{L^{p'}}+\big\|(k_{\xi}^{B_{2}})^{2}\big\|_{L^{p'}}\right)\nonumber \\
 & =\|f\|_{L^{p}}\left(\big\| k_{\xi}^{\widetilde{B}_{1}}\big\|_{H^{2p'}}^{2}+\big\| k_{\xi}^{B_{2}}\big\|_{H^{2p'}}^{2}\right),\qquad|\xi|=1.
\end{aligned}
\end{equation}
Now for any finite Blaschke product $B=\prod_{j=1}^{d}b_{\nu_{j}}$, 
$\nu_{j}\in\mathbb{D}$, we have for $|\xi|=|u|=1$ 
\begin{equation}
\label{repr_kern_estim_Lp}
  \big\| k_{\xi}^{B}\big\|_{H^{2p'}}^{2p'}  =
  \int_{\mathbb{T}}\left|k_{\xi}^{B}(u)\right|^{2(p'-1)+2}{\rm d}{m}(u)
  \leq \big\| k_{\xi}^{B}\big\|_{H^{2}}^{2}\max_{|u|=1}\left|k_{\xi}^{B}(u)\right|^{2(p'-1)}.
\end{equation}
On one hand, by \eqref{fact},
\[
\big\| k_{\xi}^{B}\big\|_{H^{2}}^{2}=\sum_{k=1}^{d}\frac{1-\left|\nu_{k}\right|^{2}}{\left|1-\nu_{k}\xi\right|^{2}},\qquad|\xi|=1,
\]
and on the other hand that for any $u,\,\xi\in\mathbb{T},$ 
\begin{equation}
  \label{CS0}
  \vert k_{\xi}^{B}(u)\vert\leq\sum_{j=1}^{d}\frac{1-\left|\nu_{j}\right|^{2}}{\left|1-\overline{\nu_{j}}\xi\right|\left|1-\overline{\nu_{j}}u\right|},
\end{equation}
which, by the Cauchy--Schwarz inequality, gives 
\begin{equation}
 \label{CS1}
 \vert k_{\xi}^{B}(u)\vert^{2}\leq\sum_{j=1}^{d}\frac{1-\left|\nu_{j}
 \right|^{2}}{\left|1-\overline{\nu_{j}}\xi\right|^{2}}\sum_{j=1}^{d}\frac{1+\left|\nu_{j}\right|}{1-\left|\nu_{j}\right|}, 
 \qquad u\in\mathbb{T}.
\end{equation}
Thus combining (\ref{CS1}) with (\ref{repr_kern_estim_Lp}), we obtain:
\begin{equation}
 \label{repr_kern_estim_Lp_2}
 \big\| k_{\xi}^{B}\big\|_{H^{2p'}}^{2p'}  
 \leq\bigg(\sum_{k=1}^{d}\frac{1-\left|\nu_{k}\right|^{2}}{\left|1-\nu_{k}\xi\right|^{2}}\bigg)^{p'}\bigg(\sum_{j=1}^{d}\frac{1+\left|\nu_{j}\right|}{1-\left|\nu_{j}\right|}\bigg)^{p'-1}.
\end{equation}
Applying the last inequality to $\widetilde{B}_1$ and $B_2$ we get
\[ 
\big\| k_{\xi}^{\widetilde{B}_{1}}\big\|_{H^{2p'}}^{2}+
\big\| k_{\xi}^{B_{2}}\big\|_{H^{2p'}}^{2} 
\leq \sum_{k=1}^{n_1}\frac{1-\left|\lambda_{k}\right|^{2}}{\left|1-\lambda_{k}\xi\right|^{2}}
 \bigg(\sum_{k=1}^{n_1}\frac{1+\left|\lambda_{k}\right|}{1-\left|\lambda_{k}\right|}\bigg)^{\frac{1}{p}}
+ \sum_{k=1}^{n_2}\frac{1-\left|\mu_{k}\right|^{2}}{\left|1-\overline{\mu_{k}}\xi\right|^{2}}
 \bigg(\sum_{k=1}^{n_2}\frac{1+\left|\mu_{k}\right|}{1-\left|\mu_{k}\right|}\bigg)^{\frac{1}{p}}, 
\]
which, in view of the definition of 
$\mathcal{D}_1(a)$ and $\mathcal{D}_2(a)$, completes the proof. 
\qed
\medskip

%*************************************************************

\subsection{Proof of Theorem \ref{ThGenSpLem}}
We use the integral representation (\ref{integrepder}) and Fubini--Tonelli's
Theorem to get: 
\begin{equation}
\label{nrk11-1}
\begin{aligned}
\left\Vert f'\right\Vert _{L^{1}} 
 & \leq  \int_{\mathbb{T}}\bigg(\int_{\mathbb{T}}\left|f(\tau)\right|
\Big(\big|k_{\xi}^{B_{2}}(\tau)\big|^{2}+\big|k_{\xi}^{\widetilde{B}_{1}}(\tau)\big|^{2}
\Big){\rm d}{m}(\tau)\bigg){\rm d}{m}(\xi) \\
 & =  \int_{\mathbb{T}}\left|f(\tau)\right|\left(\big|B_{2}'(\tau)\big|
+\big|\widetilde{B}_{1}'(\tau)\big|\right){\rm d}{m}(\tau),
\end{aligned}
\end{equation}
where the last equality comes from (\ref{fact}). Now, for any finite
Blaschke product $B=B_{\sigma}$ of degree $d,$ corresponding to
a set $\sigma=\left\{ \nu_{1},\,\nu_{2},\,\dots,\,\nu_{d}\right\} \subset r\mathbb{D}$,
we have $\left\Vert B'\right\Vert _{L^{1}}=d$ (this follows immediately if 
we integrate equality \eqref{fact}).

On the other hand, 
\begin{equation}
\label{LinftyderivBlaschke}
\left\Vert B'\right\Vert _{L^{\infty}} \leq 
\sum_{k=1}^{d}\bigg\Vert\frac{1-\left|\nu_{k}\right|^{2}}{\left(1-\overline{\nu_{k}}z\right)^{2}}\bigg\Vert_{L^{\infty}}
\leq  \sum_{k=1}^{d}\frac{1+\left|\nu_{k}\right|}{1-\left|\nu_{k}\right|}.
\end{equation}
This implies that for any $1\leq p'\leq\infty ,$ 
\begin{equation}
\label{LpderivBlaschke-1}
\left\Vert B'\right\Vert _{L^{p'}} \leq  
\left\Vert B'\right\Vert _{L^{\infty}}^{1-\frac{1}{p'}} 
\left\Vert B'\right\Vert _{L^{1}}^{\frac{1}{p'}} 
\leq  d^{\frac{1}{p'}}\bigg(\sum_{k=1}^{d}\frac{1+\left|\nu_{k}\right|}{1-\left|\nu_{k}\right|}\bigg)^{1-\frac{1}{p'}}.
\end{equation} 
Going back to (\ref{nrk11-1}) and applying the H\"older inequality, we get 
\begin{eqnarray*}
\left\Vert f'\right\Vert _{L^{1}} & = & 
\int_{\mathbb{T}}\left|f(\tau)\right|\left|B_{2}'(\tau)\right|{\rm d}{m}(\tau)+
\int_{\mathbb{T}}\left|f(\tau)\right| |\widetilde{B}_{1}'(\tau)| {\rm d}{m}(\tau)\\
 & \leq & \left\Vert f\right\Vert _{L^{p}}\left(\left\Vert B_{2}'\right\Vert _{L^{p'}}+\big\|\widetilde{B}_{1}'\big\|_{L^{p'}}\right),
\end{eqnarray*}
where $p'$ is the conjugate of $p,$. Applying the last inequality
combined with (\ref{LpderivBlaschke-1}) to $\widetilde{B}_1$ and $B_2$ we get
$$
\left\Vert f'\right\Vert _{L^{1}} \leq  \left\Vert f\right\Vert _{L^{p}}
\bigg(n_1^{1-\frac{1}{p}}\bigg(\sum_{k=1}^{n_1}\frac{1+\left|\lambda_{k}\right|}
{1-\left|\lambda_{k}\right|}\bigg)^{\frac{1}{p}}+n_2^{1-\frac{1}{p}}
\bigg(\sum_{k=1}^{n_2}\frac{1+\left|\mu_{k}\right|}{1-\left|\mu_{k}\right|}\bigg)^{\frac{1}{p}}\bigg),
$$
as required. 
\qed
\medskip

%*************************************************************************

\subsection{Proof of Theorem \ref{ThBernLpLq}, inequality \eqref{qgeqp_up}}
\label{ooo}
The proof will consist of several steps.
\smallskip

\textbf{Step 1. The case $q=\infty$, $1\leq p\leq\infty$.} 
Clearly, for a function $f\in \mathcal{R}_{n,\, r}$ 
having $n_1$ poles inside $\mathbb{D}$
and $n_2$ poles outside $\overline{\mathbb{D}}$
we have
$$
\mathcal{D}_1 \le n_1\frac{1+r}{1-r}, \qquad 
\mathcal{D}_2 \le n_2\frac{1+r}{1-r}.
$$
Taking
the supremum over all $\xi\in\mathbb{T}$ in (\ref{BEp})
we obtain 
\begin{equation}
\label{Step1}
\big\| f'\big\|_{L^{\infty}} 
\le \left(\frac{1+r}{1-r}\right)^{1+\frac{1}{p}}(n_1+n_2) 
\big\| f\big\|_{L^{p}}.
\end{equation}
\smallskip

\textbf{Step 2. The case $q=1$, $1\leq p\leq\infty$.}
A direct consequence of the inequality (\ref{GenSpLem}) is that 
\begin{equation}
\big\| f'\big\|_{L^{1}}\leq\left(\frac{1+r}{1-r}\right)^{\frac{1}{p}}
(n_1 +n_2) \big\| f\big\|_{L^{p}},\label{Step2}
\end{equation}
for any $f\in\mathcal{R}_{n,\, r}$ having $n_1$ poles inside $\mathbb{D}$
and $n_2$ poles outside $\overline{\mathbb{D}}.$
\medskip

\textbf{Step 3. The case $p=q$.}
For any $f\in\mathcal{R}_{n,\, r}$ (as always of the form (\ref{simplform})), we have
by (\ref{integrepder})
\begin{eqnarray*}
\left\Vert f'\right\Vert _{L^{p}}^{p} & = & \int_{\mathbb{T}}\left|f'(\xi)\right|^{p}{\rm d}{m}(\xi)\\
 & = & \int_{\mathbb{T}}\bigg|\int_{\mathbb{T}}\overline{\tau}f(\tau)\overline{u\left(k_{\xi}^{B_{2}}\right)^{2}-\xi^{2}\overline{u}\overline{\left(k_{\xi}^{\widetilde{B}_{1}}\right)^{2}}}{\rm d}{m}(\tau)\bigg|^{p}{\rm d}{m}(\xi)\\
 & \leq & \int_{\mathbb{T}}\bigg(\int_{\mathbb{T}}\left|f(\tau)\right|\bigg(|k_{\xi}^{B_{2}}(\tau)|^{2}+|k_{\xi}^{\widetilde{B}_{1}}(\tau)|^{2}\bigg){\rm d}{m}(\tau)\bigg)^{p}{\rm d}{m}(\xi).
\end{eqnarray*}
Now applying the H\"older inequality ($p'$ being the conjugate of $p$), we obtain
\[
\begin{aligned}
\left\Vert f'\right\Vert _{L^{p}}^{p} & \le
\bigg(\int_{\mathbb{T}}\Big(\left|k_{\xi}^{B_{2}}(\tau)\right|^{2}+
|k_{\xi}^{\widetilde{B}_{1}}(\tau)|^{2}\Big){\rm d}{m}(\tau)\bigg)^{\frac{p}{p'}}
\int_{\mathbb{T}}\left|f(\tau)\right|^{p}\Big(
\left|k_{\xi}^{B_{2}}(\tau)\right|^{2}+ |k_{\xi}^{\widetilde{B}_{1}}(\tau)|^{2}\Big){\rm d}{m}(\tau) \\
& =\left(\left|B_{2}'(\xi)\right|+ |\widetilde{B}_{1}'(\xi)|\right)^{\frac{p}{p'}}
\int_{\mathbb{T}}\left|f(\tau)\right|^{p}\Big(\left|k_{\xi}^{B_{2}}(\tau)\right|^{2}+
|k_{\xi}^{\widetilde{B}_{1}}(\tau)|^{2}\Big){\rm d}{m}(\tau),
\end{aligned}
\]
where the last equality comes from (\ref{fact}). Now, integrating
the last inequality on the unit circle with respect to $\xi$, we
obtain 
\begin{eqnarray*}
\left\Vert f'\right\Vert _{L^{p}}^{p} & \leq & 
\int_{\mathbb{T}}\left(\left|B_{2}'(\xi)\right|+|\widetilde{B}_{1}'(\xi)|
\right)^{\frac{p}{p'}}\int_{\mathbb{T}}\left|f(\tau)\right|^{p}
\left(\left|k_{\xi}^{B_{2}}(\tau)\right|^{2}+|k_{\xi}^{\widetilde{B}_{1}}(\tau)|^{2}
\right){\rm d}{m}(\tau){\rm d}{m}(\xi)\\
 & \leq & \max_{|\xi|=1}\left(\left|B_{2}'(\xi)\right|+
|\widetilde{B}_{1}'(\xi)|\right)^{\frac{p}{p'}}
\int_{\mathbb{T}}\int_{\mathbb{T}}\left|f(\tau)\right|^{p}\left(
\left|k_{\xi}^{B_{2}}(\tau)\right|^{2}+|k_{\xi}^{\widetilde{B}_{1}}(\tau)|^{2}
\right){\rm d}{m}(\tau){\rm d}{m}(\xi)\\
 & = & \max_{|\xi|=1}\left(\left|B_{2}'(\xi)\right|+|\widetilde{B}_{1}'(\xi)|
\right)^{\frac{p}{p'}}\int_{\mathbb{T}}\left|f(\tau)\right|^{p}\left(
\left|B_{2}'(\tau)\right|+|\widetilde{B}_{1}'(\tau)|\right){\rm d}{m}(\tau).
\end{eqnarray*}
Finally, using (\ref{LinftyderivBlaschke}) we obtain 
\begin{eqnarray*}
\left\Vert f'\right\Vert _{L^{p}}^{p} & \leq & \max_{|\xi|=1}
\left(\left|B_{2}'(\xi)\right|+|\widetilde{B}_{1}'(\xi)|\right)^{\frac{p}{p'}+1}\left\Vert f\right\Vert _{L^{p}}^{p}\\
 & \leq & 
\bigg(\sum_{k=1}^{n_1}\frac{1+\left|\lambda_{k}\right|}{1-\left|\lambda_{k}\right|}
+ \sum_{k=1}^{n_2}\frac{1+\left|\mu_{k}\right|}{1-\left|\mu_{k}\right|}
\bigg)^{\frac{p}{p'}+1}\left\Vert f\right\Vert _{L^{p}}^{p},
\end{eqnarray*}
whence
\begin{equation}
\left\Vert f'\right\Vert_{L^{p}}\leq n 
\frac{1+r}{1-r}\left\Vert f\right\Vert _{L^{p}}.\label{Step3}
\end{equation}
\smallskip

\textbf{Step 4. The case $1 < q < p <\infty$.} 
This follows by interpolation between the cases $q=1$ and $q=p$.
For any $f\in\mathcal{R}_{n,\, r},$ we have, by the H\"older inequality with the exponents
$\frac{p-1}{p-q}$ and $\frac{p-1}{q-1}$,
$$
\left\Vert f'\right\Vert _{L^{q}}^{q}  
 =  \int_{\mathbb{T}}\left|f'(\tau)\right|^{\frac{p-q}{p-1}+\frac{p(q-1)}{p-1}}{\rm d}{m}(\tau)
\le  \left\Vert f'\right\Vert _{L^{1}}^{\frac{p-q}{p-1}}\left\Vert f'\right\Vert _{L^{p}}^{\frac{p(q-1)}{p-1}}.
$$
Now, using both the inequalities (\ref{Step2}) and (\ref{Step3})
from Steps 2 and 3, we obtain the required estimate:
$$
\begin{aligned}
\left\Vert f'\right\Vert _{L^{q}}^{q} &  
\le \bigg(n \Big(\frac{1+r}{1-r}\Big)^{\frac{1}{p}} 
\|f\|_{L^p}\bigg)^{\frac{p-q}{p-1}} 
\bigg(n \frac{1+r}{1-r}\|f\|_{L^p}\bigg)^{\frac{p(q-1)}{p-1}} \\
& \leq  n^q \left(\frac{1+r}{1-r}\right)^{\frac{1}{p}\frac{p-q}{p-1}+\frac{p(q-1)}{p-1}}\left\Vert f\right\Vert _{L^{p}}^{q}
  =  n^{q}\left(\frac{1+r}{1-r}\right)^{q-1+\frac{q}{p}}\left\Vert f\right\Vert _{L^{p}}^{q}.
\end{aligned}
$$
\smallskip

\textbf{Step 5. The case $1\leq p\leq q\leq\infty.$} 
We now interpolate between $q=p$ and $q=\infty$:
$$
\left\Vert f'\right\Vert _{L^{q}}^{q}  \leq 
\left\Vert f'\right\Vert _{L^{\infty}}^{q-p}\left\Vert f'\right\Vert _{L^{p}}^{p} 
 \le 
\left(n\frac{1+r}{1-r}\right)^{1+\frac{1}{p}-\frac{1}{q}}
\left\Vert f\right\Vert _{L^{p}},
$$
where we used inequalities (\ref{Step1}) and  (\ref{Step3}).
\qed

\begin{rem*}
For the case when $1\leq q\leq p\leq\infty$
and the function $f \in \mathcal{R}_{n}$ has no poles in $\overline{\mathbb{D}}$,
a different proof of the upper bound 
(but without explicit constants) could be given by an application
of a result by  K.M. Dyakonov \cite[Theorem 11]{Dya3}. 
Namely, applying inequality (11.2) from \cite{Dya3} with $s=1$
we obtain that for a rational function $f$ of degree $n$ 
with the poles $1/\overline{\nu_{1}},\,1/\overline{\nu_{2}},\,\dots,\,
1/\overline{\nu_{n}} \notin \overline{\mathbb{D}}$ we have
\begin{equation}
\left\Vert f'\right\Vert _{L^{q}}\leq C \left\Vert B'\right\Vert _{L^{\gamma}}\left\Vert f\right\Vert _{L^{p}},\label{Dyakinequ}
\end{equation}
where $q\leq p$, $\frac{1}{\gamma}=\frac{1}{q}-\frac{1}{p}$, 
$C$  is a constant depending on $q$ and $p$ which is not precised, and
$B=B_{\sigma}$ is the finite Blaschke product corresponding to 
$\sigma=\left(\nu_{1},\,\nu_{2},\,\dots,\,\nu_{n}\right)$.
It remains to apply inequality (\ref{LpderivBlaschke-1}).
\end{rem*}

%******************************************************************

\subsection{Proof of the upper bound in Theorem \ref{ThNikGen}}

\textbf{Step 1. The special case $q=\infty $ and $1\leq p\leq2.$}
Following the assumptions of Subsection \ref{Subs_assump} and applying
H\"older inequality to (\ref{integrep}), we obtain (with the notations
of Lemma \ref{lemma1}) that for any $\xi\in\mathbb{T},$ 
\begin{equation}
\left|f(\xi)\right|\leq\left\Vert f\right\Vert _{L^{p}}\left\Vert \phi_{\xi}\right\Vert _{L^{p'}},\label{hold1}
\end{equation}
where $p'\geq2$ stands for the conjugate of $p.$ Moreover, 
$$
\left\Vert \phi_{\xi}\right\Vert _{L^{p'}} 
 \le \big\Vert k_{\xi}^{zB_{2}}\big\Vert_{L^{p'}}+\big\Vert k_{\xi}^{\widetilde{B_{1}}}\big\Vert_{L^{p'}}.
$$
Now, we prove that for any finite Blaschke product $B=B_{\sigma}$
of degree $d,$ corresponding to a set $\sigma=\left\{ \nu_{1},\,\nu_{2},\,\dots,\,\nu_{d}\right\} \subset r\mathbb{D}$,
we have 
\begin{equation}
\left\Vert k_{\xi}^{B}\right\Vert _{L^{p'}}\leq\left(\frac{1+r}{1-r}d\right)^{1+\frac{1}{p}}.\label{eq:nrk2}
\end{equation}
Indeed, as a direct consequence of (\ref{CS0}), we get 
\begin{equation}
\label{infinitynorm}
\left\Vert k_{\xi}^{B}\right\Vert _{L^{\infty}}\leq\sum_{j=1}^{d}\frac{1+\left|\nu_{j}\right|}{1-\left|\nu_{j}\right|}\leq d \frac{1+r}{1-r}.
\end{equation}
Moreover, (\ref{fact}) clearly gives that for any $\xi\in\mathbb{T},$
\begin{equation}
\label{L2norm}
\left\Vert k_{\xi}^{B}\right\Vert _{L^{2}}^{2}=\sum_{j=1}^{d}\frac{1-\left|\nu_{j}\right|^{2}}{\left|1-\overline{\nu_{j}}\xi\right|^{2}}\leq\sum_{j=1}^{d}\frac{1+\left|\nu_{j}\right|}{1-\left|\nu_{j}\right|}\leq d \frac{1+r}{1-r}.
\end{equation}
Thus, combining (\ref{infinitynorm}) and (\ref{L2norm}), we get
that for any $p'\geq2$, 
$$
\left\Vert k_{\xi}^{B}\right\Vert _{L^{p'}}  \leq  
\left\Vert k_{\xi}^{B}\right\Vert _{L^{\infty}}^{1-\frac{2}{p'}}
\left\Vert k_{\xi}^{B}\right\Vert _{L^{2}}^{\frac{2}{p'}} 
\leq  d^{p'-1}\left(\frac{1+r}{1-r}\right)^{p'-1}.
$$
Applying this to $B=\widetilde{B}_1$ and to $B = zB_2$, we get 
\begin{equation}
\label{Lpkernelnorm1}
\left\Vert \phi_{\xi}\right\Vert _{L^{p'}}  \leq 
\left(\frac{1+r}{1-r}\right)^{1-\frac{1}{p'}}\big(n_1^{1-\frac{1}{p'}}
+(n_2+1)^{1-\frac{1}{p'}}\big),
\end{equation}
where $n_1$ (respectively, $n_2$) is the number of poles of $f$ inside $\mathbb{D}$
(respectively, outside $\overline{\mathbb{D}})$.
Thus, it follows from (\ref{hold1}) and (\ref{Lpkernelnorm1})
that for any $1\leq p\leq2,$ 
\begin{equation}
\label{Step1-1}
\left\Vert f\right\Vert _{L^{\infty}}
\leq\left(\frac{1+r}{1-r}\right)^{\frac{1}{p}}
\left(n_1^{\frac{1}{p}}+(n_2+1)^{\frac{1}{p}}\right)\left\Vert 
f\right\Vert _{L^{p}},
\end{equation}
as required.
\medskip

\textbf{Step 2. The case $1\leq p\leq2$ and $1\leq p<q\leq\infty$.}
For any $f\in\mathcal{R}_{n,\, r},$
\[
\left\Vert f\right\Vert _{L^{q}}^{q}\leq\left\Vert f\right\Vert _{L^{\infty}}^{q-p}\left\Vert f\right\Vert _{L^{p}}^{p},
\]
which gives using (\ref{Step1-1}), 
\begin{eqnarray*}
\left\Vert f\right\Vert _{L^{q}} & \leq & \left\Vert f\right\Vert _{L^{\infty}}^{1-\frac{p}{q}}\left\Vert f\right\Vert _{L^{p}}^{\frac{p}{q}}\nonumber \\
 & \leq & \left(\frac{1+r}{1-r}\right)^{\frac{1}{p}-\frac{1}{q}}\bigg(n_1^{\frac{1}{p}}+(n_2+1)^{\frac{1}{p}}\bigg)^{1-\frac{p}{q}}\left\Vert f\right\Vert _{L^{p}}\nonumber \\
 & \leq & \left(\frac{1+r}{1-r}\right)^{\frac{1}{p}-\frac{1}{q}}\left(n_1^{\frac{1}{p}-\frac{1}{q}}+
(n_2+1)^{\frac{1}{p}-\frac{1}{q}}\right)\left\Vert f\right\Vert _{L^{p}}. 
\end{eqnarray*}
\smallskip

\textbf{Step 3. The case $2\leq p\leq\infty $ and $1\leq p<q\leq\infty .$}
Let $p'$ and $q'$ be the conjugates to $p$ and $q$, respectively.
Then $1\le q' < p' \le 2$. By Step 2 we have 
for any $V_{\sigma_{1},\,\sigma_{2}}\subset\mathcal{R}_{n,\, r}$
(see Subsection \ref{Subs_assump} for the definition)
\[
\left\Vert Id\right\Vert _{\left(V_{\sigma_{1},\,\sigma_{2}},\, L^{q'}\right)
\rightarrow\left(V_{\sigma_{1},\,\sigma_{2}},\, L^{p'}\right)}
\leq\left(\frac{1+r}{1-r}\right)^{\frac{1}{q'}-\frac{1}{p'}}
\left(n_1^{\frac{1}{q'}-\frac{1}{p'}}+(n_2+1)^{\frac{1}{q'}-\frac{1}{p'}}\right),
\]
where $Id$ is the identity operator.
Now denoting by $Id^{\star}$ the adjoint operator (for the usual
Cauchy duality) of $Id$, we have $Id^{\star}=Id.$ 
Therefore,
\begin{eqnarray*}
\left\Vert Id\right\Vert _{\left(V_{\sigma_{1},\,\sigma_{2}},\, L^{p}\right)
  \rightarrow\left(V_{\sigma_{1},\,\sigma_{2}},\, L^{q}\right)} 
 & = & \left\Vert Id^{\star}\right\Vert _{\left(V_{\sigma_{1},\,\sigma_{2}},\, L^{p}\right)\rightarrow\left(V_{\sigma_{1},\,\sigma_{2}},\, L^{q}\right)}\nonumber \\
 & = & \left\Vert Id\right\Vert _{\left(V_{\sigma_{1},\,\sigma_{2}},\, L^{q'}\right)\rightarrow\left(V_{\sigma_{1},\,\sigma_{2}},\, L^{p'}\right)}\nonumber \\
 & \leq & \left(\frac{1+r}{1-r}\right)^{\frac{1}{q'}-\frac{1}{p'}}
\left(n_1^{\frac{1}{q'}-\frac{1}{p'}}+(n_2+1)^{\frac{1}{q'}-\frac{1}{p'}}\right)\nonumber \\
 & = & \left(\frac{1+r}{1-r}\right)^{\frac{1}{p}-\frac{1}{q}}
\left(n_1^{\frac{1}{p}-\frac{1}{q}}+(n_2+1)^{\frac{1}{p}-\frac{1}{q}}
\right),
\end{eqnarray*}
which is the required estimate. 
\qed
\bigskip

%**********************************************************************
%**********************************************************************

\section{\label{Subs_proof_sharpness}Proofs of the lower bounds}

In this section we prove the asymptotic sharpness of the inequalities 
in Theorems \ref{TH_BE_Gen}, \ref{ThBernLpLq} and \ref{ThNikGen} 
as $n$ tends to infinity and the poles of the
rational functions approach the unit circle $\mathbb{T}$. From now
on, we denote by 
\begin{equation}
D_{n}(z) = \sum_{k=0}^{n-1}z^{k} \label{def_Dir_kern}
\end{equation}
the Dirichlet kernel of order $n\geq 1$. 
The asymptotic behaviour of $\big\Vert D_{n}\big\Vert_{L^{q}}$ 
as $n$ tends to $\infty $ is well known: for $q>1$, 
\begin{equation}
\label{Fosc_result}
\lim_{n\rightarrow\infty }\frac{\big\Vert D_{n}\big\Vert_{L^{q}}}{n^{1-\frac{1}{q}}}=\bigg(\frac{1}{\pi}\int_{\mathbb{R}}\bigg\vert\frac{\sin x}{x}\bigg\vert^{q}{\rm d}x\bigg)^{\frac{1}{q}}.
\end{equation}

Put $I_{n}=\big\{ z\in\mathbb{T}:\: z=e^{it},\, t\in 
\big[\frac{\pi}{4n},\,\frac{\pi}{2n}\big]\big\}$.
It is well known and easy to see that for $q>1$ the integral over 
the arc $I_n$ gives a substantial contribution to  
the norms $\big\Vert D_{n}\big\Vert_{L^{q}}$ and $\big\Vert D_{n}D_{n}'\big\Vert_{L^{q}}$, 
namely
\begin{equation}
\label{Lq_norm_D_n}
%\label{Lq_norm_DnDnprime}
\int_{I_{n}} \left|D_{n}(\xi)\right|^{q}{\rm d}{m}(\xi) 
\gtrsim n^{q-1}, \qquad
\int_{I_{2n}}\left|D_{n}(\xi)
D_{n}'(\xi)\right|^{q}{\rm d}{m}(\xi) \gtrsim n^{3q-1}.
\end{equation}

Now we introduce the test functions from $\mathcal{R}_{n}$
which will be used throughout this section 
to illustrate the sharpness of the inequalities
in Theorems \ref{TH_BE_Gen}, \ref{ThBernLpLq} and \ref{ThNikGen}. Put
\begin{equation}
\label{testfunc1}
f(z)=b_{-r}'(z)\sum_{k=0}^{n-2}b_{-r}^{k}(z), \qquad
b_{-r}(z) = \frac{z+r}{1+rz}.
\end{equation}
Thus, $f$ is essentially the Dirichlet
kernel transplanted from the origin to the point $\lambda=-r\in(0,\,1)$
by the change of variable $\circ b_{-r}$. Also, for $n\ge 4$, we put
\begin{equation}
\label{testfunc2}
g(z)=b_{-r}'(z)\bigg(\sum_{k=0}^{N}b_{-r}^{k}(z)\bigg)^{2}\in\mathcal{R}_{n},
\end{equation}
where $N$ is the integer part of $\frac{n-2}{2}$.
It is interesting to note that other natural (and simpler) candidates such 
as $h(z) = (1-rz)^{-n}$ or $h(z)=\frac{1}{1-rz} b_{r}^{n-1}(z)$ will not
give the right order of $n$ in the inequality
(\ref{qgeqp_up}) for $p<q$.

We will need the following lemma.

\begin{lem}
\label{bab}
We have
\begin{equation}
\label{Lp_norm_upper_estim_test_func_1}
\left\Vert f\right\Vert _{L^{p}}\lesssim
\left(\frac{n}{1-r}\right)^{1-\frac{1}{p}}, \qquad 1<p\leq\infty,
\end{equation}
and
\begin{equation}
\label{Lp_norm_test_func_2}
\left\Vert g\right\Vert _{L^{p}}\lesssim
\frac{n^{2-\frac{1}{p}}}{\left(1-r\right)^{1-\frac{1}{p}}}, \qquad 1 \le p\leq\infty.
\end{equation}
\end{lem}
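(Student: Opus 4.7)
Both test functions are Dirichlet kernels composed with the Möbius transformation $b_{-r}$ and multiplied by its derivative $b_{-r}'$. The plan is to make the change of variable $w=b_{-r}(z)$ on the circle, which reduces the norm estimates for $f$ and $g$ to the classical bounds for $\|D_{N}\|_{L^{q}}$ already recorded in \eqref{Fosc_result}.

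\textbf{Step 1 (identification as modulated Dirichlet kernels).} One checks that $\sum_{k=0}^{n-2}b_{-r}^{k}(z)=D_{n-1}\bigl(b_{-r}(z)\bigr)$ and $\bigl(\sum_{k=0}^{N}b_{-r}^{k}(z)\bigr)^{2}=D_{N+1}\bigl(b_{-r}(z)\bigr)^{2}$, so that with $w=b_{-r}(z)$ we have $f(z)=b_{-r}'(z) D_{n-1}(w)$ and $g(z)=b_{-r}'(z) D_{N+1}(w)^{2}$. The map $b_{-r}\colon\mathbb{T}\to\mathbb{T}$ is an orientation-preserving diffeomorphism; parametrizing $z=e^{i\theta}$ and $b_{-r}(z)=e^{i\phi(\theta)}$ one finds $\phi'(\theta)=|b_{-r}'(z)|$, hence $dm(w)=|b_{-r}'(z)|\,dm(z)$.

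\textbf{Step 2 (reduction to $\|D_N\|_{L^q}$ and conclusion).} Absorbing one factor of $|b_{-r}'(z)|$ into $dm(w)$ gives
\[
\|f\|_{L^{p}}^{p}=\int_{\mathbb{T}}|b_{-r}'(z)|^{p}|D_{n-1}(w)|^{p}\,dm(z)=\int_{\mathbb{T}}\bigl|b_{-r}'(b_{-r}^{-1}(w))\bigr|^{p-1}|D_{n-1}(w)|^{p}\,dm(w).
\]
Bounding $|b_{-r}'(z)|=\frac{1-r^{2}}{|1+rz|^{2}}\leq\frac{1+r}{1-r}$ on $\mathbb{T}$ and invoking the classical estimate $\|D_{N}\|_{L^{p}}\lesssim N^{1-1/p}$ for $p>1$ (following from \eqref{Fosc_result}; the case $p=\infty$ is immediate from $\|f\|_{L^{\infty}}\leq\|b_{-r}'\|_{L^{\infty}}\|D_{n-1}\|_{L^{\infty}}$) yields exactly $\|f\|_{L^{p}}\lesssim (n/(1-r))^{1-1/p}$. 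The same calculation applied to $g$ gives
\[
\|g\|_{L^{p}}^{p}\leq\left(\tfrac{1+r}{1-r}\right)^{p-1}\|D_{N+1}\|_{L^{2p}}^{2p},
\]
and since $2p\geq 2$ for every $p\geq 1$, the bound $\|D_{N+1}\|_{L^{2p}}\lesssim N^{1-1/(2p)}$ applies (with Parseval covering the endpoint $p=1$), producing the desired $\|g\|_{L^{p}}\lesssim n^{2-1/p}/(1-r)^{1-1/p}$.

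\textbf{Main obstacle.} There is no serious analytic difficulty; the only point worth flagging is the exclusion of $p=1$ in the bound for $f$. At $p=1$ the weight $|b_{-r}'|^{p-1}$ trivializes and the change of variable reduces the estimate to $\|D_{n-1}\|_{L^{1}}\asymp\log n$, which does not fit into the target $\lesssim 1$. For $g$ the squared Dirichlet kernel supplies the extra factor $|D_{N+1}|$ that keeps the argument alive at $p=1$, which is why the stated range for $g$ is $1\leq p\leq\infty$ rather than $1<p\leq\infty$.
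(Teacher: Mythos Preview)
Your proof is correct and essentially identical to the paper's: both perform the change of variable $w=b_{-r}(z)$ (the paper uses that $b_{-r}$ is an involution to write $b_{-r}'\circ b_{-r}(\zeta)=-\frac{(1+r\zeta)^2}{1-r^2}$ explicitly, while you bound $\|b_{-r}'\|_{L^\infty(\mathbb{T})}\le\frac{1+r}{1-r}$ directly, which amounts to the same estimate), then reduce to the Dirichlet kernel norms via \eqref{Fosc_result}. Your remark explaining why $p=1$ is excluded for $f$ but not for $g$ is a nice addition that the paper does not spell out.
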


\begin{proof}
Let us compute the $L^{p}$-norm of $f$. Making use of the change of variable 
$\zeta = b_{-r}(\xi)$ we get
\begin{align}
\left\Vert f\right\Vert _{L^{p}}^{p} & =\int_{\mathbb{T}}\left|b_{-r}'(\xi)\right|\left|b_{-r}'(\xi)\right|^{p-1}\left|\sum_{k=0}^{n-2}b_{-r}^{k}(\xi)\right|^{p}{\rm d}{m}(\xi)\nonumber \\
 & =\int_{\mathbb{T}}\left|b_{-r}'(b_{-r}(\zeta))\right|^{p-1}\left|D_{n-1}(\zeta)
\right|^{p}{\rm d}{m}(\zeta). 
\end{align}
By a straightforward computation, 
$b_{-r}'\circ b_{-r}=\frac{r^{2}-1}{(1+rb_{-r})^{2}} = -\frac{(1+rz)^{2}}{1-r^{2}}.$ Thus,
\begin{equation}
\label{Lp_first_estim_test_func_1}
\left\Vert f\right\Vert _{L^{p}}^{p} = 
\frac{1}{\left(1-r^{2}\right)^{p-1}}\int_{\mathbb{T}}\left|1+r\zeta\right|^{2(p-1)}\left|D_{n-1}(\zeta)\right|^{p}{\rm d}{m}(\zeta)\nonumber \\
 \leq \frac{(1+r)^{p-1}}{\left(1-r\right)^{p-1}}\left\Vert D_{n-1}\right\Vert _{L^{p}}^{p},
\end{equation}
and the statement follows from (\ref{Fosc_result}).

Analogously, changing the variable $\zeta = b_{-r}(\xi)$, we obtain
$$ 
\begin{aligned}
\left\Vert g\right\Vert _{L^{p}}^p 
 & = \int_{\mathbb{T}}\left|b_{-r}'(b_{-r}(\zeta))\right|^{p-1}\left|D_{N+1}(\zeta)\right|^{2p}{\rm d}{m}(\zeta) \\
 & = \frac{1}{\left(1-r^{2}\right)^{p-1}}\int_{\mathbb{T}}\left|1+r\zeta\right|^{2(p-1)}\left|D_{N+1}(\zeta)\right|^{2p}{\rm d}{m}(\zeta) \\
 & \leq \frac{(1+r)^{p-1}}{\left(1-r\right)^{p-1}}\left\Vert D_{N+1}\right\Vert _{L^{2p}}^{2p}\lesssim\frac{(1+r)^{p-1}}{\left(1-r\right)^{p-1}}n^{2p-1}. 
\end{aligned}
$$
\qed

%********************************************************************

\subsection{\label{subsub_sharp_BEp} Sharpness in Theorem \ref{TH_BE_Gen}}

We prove here the statement (ii) of Theorem \ref{TH_BE_Gen}. 
Without loss of generality we assume that $n\ge 5$ (for small $n$
the statement is obvious, take the test function $(1-rz)^{-2}$).
First we consider the case $1<p\leq\infty$. For $r\in(0,\,1)$ 
let $f$ be defined by \eqref{testfunc1}. We have 
\[
f'=b_{-r}''\sum_{k=0}^{n-2}b_{-r}^{k}+\left(b_{-r}'\right)^{2}\sum_{k=0}^{n-3}(k+1)b_{-r}^{k}.
\]
Moreover, $b_{-r}(-1)=1,$ $b_{-r}'(-1)=-\frac{1+r}{1-r},$
and $b_{-r}''(-1) =\frac{2r(1+r)}{(1-r)^{2}}.$
This gives 
$$
f'(-1)  =\frac{2r(n-1)(1+r)}{(1-r)^{2}}+\left(\frac{1+r}{1-r}\right)^{2}\sum_{k=0}^{n-3}(k+1)>\left(\frac{1+r}{1-r}\right)^{2}\sum_{k=0}^{n-3}(k+1)
\gtrsim\left(\frac{n}{1-r}\right)^{2}.
$$
On the other hand, for $a=\{a_k\}$, $a_k = \frac{1}{r}$, $k=1, \dots, n$,
we have
$$
\max\bigg(\sum_{\left|a_{k}\right|>1}
\frac{\left|a_{k}\right|^{2}-1}{\left|a_{k}+1\right|^{2}},\,\,
\sum_{\left|a_{k}\right|<1}\frac{1-\left|a_{k}\right|^{2}}{\left|a_{k}+1\right|^{2}}\bigg)
=n\frac{1+r}{1-r},
\qquad
\mathcal{D}_{2}(a)  = \sum_{\left|a_{k}\right|>1}\frac{\left|a_{k}\right|+1}{\left|a_{k}\right|-1}
=\frac{n}{1-r}.
$$
Thus, by \eqref{Lp_norm_upper_estim_test_func_1},
\[
\max\bigg(\sum_{\left|a_{k}\right|>1}\frac{\left|a_{k}\right|^{2}-1}
{\left|a_{k}+1\right|^{2}},\,\,\sum_{\left|a_{k}\right|<1}
\frac{1-\left|a_{k}\right|^{2}}{\left|a_{k}+1\right|^{2}}\bigg)
\mathcal{D}_{2}^{\frac{1}{p}}(a) = 
\left(n\frac{1+r}{1-r}\right)^{1+\frac{1}{p}} \lesssim
\frac{f'(-1)}{\|f\|_{L^p}}.
\]

In the case $p=1,$ we consider the test function $g$ defined by 
\eqref{testfunc2}. We have
\begin{equation}
\label{deriv_test_func} 
\begin{aligned}
g' & = b_{-r}''\bigg(\sum_{k=0}^{N}b_{-r}^{k}\bigg)^{2}+2\left(b_{-r}'\right)^{2}\bigg(\sum_{k=0}^{N-1}(k+1)b_{-r}^{k}\bigg)\sum_{k=0}^{N}b_{-r}^{k} \\
   & = - \frac{2}{1+rz}b_{-r}'\bigg(\sum_{k=0}^{N}b_{-r}^{k}\bigg)\bigg(r\sum_{k=0}^{N}b_{-r}^{k}+\frac{1-r^{2}}{1+rz}\sum_{k=0}^{N-1}(k+1)b_{-r}^{k}\bigg).
\end{aligned}
\end{equation}
As before, this gives 
$$
g'(-1) =\frac{2r(1+r)}{(1-r)^{2}}(N+1)^{2}+2\left(\frac{1+r}{1-r}\right)^{2}(N+1)\sum_{k=0}^{N-1}(k+1)
\gtrsim \frac{n^3}{(1-r)^2},
$$
which gives the required estimate from below for $g'(-1)/\|g\|_{L^p}$ if we use 
\eqref{Lp_norm_test_func_2}.
\qed

%***********************************************************************

\subsection{\label{subsub_sharp_GenSp_Bern_LpLq} 
Sharpness of (\ref{qgeqp_up})}
Here we show the asymptotic sharpness 
of the constants $\mathcal{C}_{n,\, r}\left(L^{q},\, L^{p}\right)$
as $n\to\infty$ and $r\to 1-$. Clearly, we need to show the sharpness only 
for sufficiently large values of $n$ (for small values of $n$ one may use the test
function $(1-rz)^{-2}$).
\medskip

\textbf{Step 1. The case $1\leq p\le q\leq\infty$.}
We consider the same test function $g$ defined in (\ref{testfunc2}).
Let us estimate from below the norm $\|g'\|_{L^q}$
using the representation \eqref{deriv_test_func} for $g'$.
Taking $b_{-r}(\xi)$ as the new variable (as in the proof of Lemma \ref{bab}), 
we get
$$
\left\Vert g'\right\Vert _{L^{q}}^{q}
=2^q \int_{\mathbb{T}}\left|\frac{1}{1+rb_{-r}(\xi)}\right|^{q}\left|b_{-r}'(b_{-r}(\xi))\right|^{q-1}\bigg\vert D_{N+1}(\xi)\bigg(rD_{N+1}(\xi)+\frac{1-r^{2}}{1+rb_{-r}(\xi)}D_{N+1}'(\xi)\bigg)\bigg\vert^{q}{\rm d}{m}(\xi).
$$
Since $1+rb_{-r}(z)=\frac{1-r^{2}}{1+rz}$ and 
$b_{-r}'\circ b_{-r}(z)=-\frac{(1+rz)^{2}}{1-r^{2}},$
we have 
$$
\begin{aligned}
\left\Vert g'\right\Vert _{L^{q}}^{q} 
& =\frac{2^q}{\left(1-r^{2}\right)^{2q-1}}\int_{\mathbb{T}}\left|1+r\xi\right|^{3q-2}\bigg\vert D_{N+1}(\xi)\bigg(rD_{N+1}(\xi)+(1+r\xi)D_{N+1}'(\xi)\bigg)\bigg\vert^{q}{\rm d}{m}(\xi) \\
& \ge \frac{2^q}{\left(1-r^{2}\right)^{2q-1}}
\int_{I_{2N}}\bigg\vert D_{N+1}(\xi)\bigg(rD_{N+1}(\xi)+
(1+r\xi)D_{N+1}'(\xi)\bigg)\bigg\vert^{q}{\rm d}{m}(\xi),
\end{aligned}
$$
where the arcs $I_{N}$ are defined at the beginning of the section. 
Now, by \eqref{Fosc_result} and \eqref{Lq_norm_D_n}, 
$$
\int_{I_{2N}}\bigg\vert D_{N+1}(\xi)\bigg\vert^{2q}{\rm d}{m}(\xi)
\lesssim N^{2-\frac{1}{q}}
$$
and
\[
\int_{I_{2N}}\bigg\vert(1+r\xi)D_{N+1}(\xi)D_{N+1}'(\xi)\bigg\vert^{q}{\rm d}{m}(\xi)\geq(1+r^{2})^{\frac{q}{2}}\int_{I_{N}}\bigg\vert D_{N+1}(\xi)D_{N+1}'(\xi)\bigg\vert^{q}{\rm d}{m}(\xi)
\gtrsim N^{3-\frac{1}{q}}.
\]
We conclude that 
\begin{equation}
\label{lower_estim_Lq_norm_g_prime}
\left\Vert g'\right\Vert _{L^{q}} 
\gtrsim\left(\frac{1}{1-r^{2}}\right)^{2-\frac{1}{q}}N^{3-\frac{1}{q}}.
\end{equation}
As a result, combining (\ref{lower_estim_Lq_norm_g_prime}) and (\ref{Lp_norm_test_func_2})
we obtain 
$$
\frac{\left\Vert g'\right\Vert _{L^{q}}}{\left\Vert g\right\Vert _{L^{p}}}  
\gtrsim \left(\frac{n}{1-r}\right)^{1+\frac{1}{p}-\frac{1}{q}}
$$
for sufficiently large values of $n.$
\medskip

\textbf{Step 2. The case $1\leq q\leq p\leq\infty$.} We consider now a simpler
test function $h(z)=\frac{1}{1-rz}b_{r}^{n-1}(z)$. We have 
\[
\|h\|_{L^{p}}=\Big\|\frac{1}{1-rz}\Big\|_{L^{p}}\lesssim\frac{1}{(1-r)^{1-\frac{1}{p}}}
\]
with a constant depending on $p$. Furthermore,
$$
h'  =  (n-1)\frac{1}{1-rz}b_{r}'b_{r}^{n-2}+\frac{r}{(1-rz)^{2}}b_{r}^{n-1}
  =  b_{r}'b_{r}^{n-2}\left(\frac{n-1}{1-rz}-\frac{r}{1-r^{2}}b_{r}\right),
$$
and, by the change of variable $\zeta = b_{-r}(\xi)$, 
\begin{eqnarray*}
\left\Vert h'\right\Vert _{L^{q}}^{q} & = & \int_{\mathbb{T}}\left|b_{r}'(\xi)\right|\left|b_{r}'(\xi)\right|^{q-1}\left|\frac{n-1}{1-r\xi}-\frac{r}{1-r^{2}}b_{r}(\xi)\right|{\rm d}{m}(\xi)\\
 & = & \int_{\mathbb{T}}\left|b_{r}'(b_{r}(\zeta))\right|^{q-1}\left|\frac{n-1}{1-rb_{r}(\zeta)}-\frac{r}{1-r^{2}}\zeta\right|^{q}{\rm d}{m}(\zeta)\\
 & = & \frac{1}{(1-r^{2})^{2q-1}}\int_{\mathbb{T}}\left|(1-r\zeta)^{2}\right|^{q-1}\left|(n-1)(1-r\zeta)-r\zeta\right|^{q}{\rm d}{m}(\zeta).
\end{eqnarray*}

Now, integrating over the arc $\frac{\pi}{2} \le \arg \zeta \le \frac{3\pi}{2}$,
it is easily seen that for $n\ge 3$,
$$
\bigg(\int_{\mathbb{T}}\left|(1-r\zeta)^{2}\right|^{q-1}\left|(n-1)(1-r\zeta)-r\zeta
\right|^{q} {\rm d}{m}(\zeta)\bigg)^{1/q} \ge cn,
$$
where $c>0$ is a numerical constant independent of $q$.
Thus, $\left\Vert h'\right\Vert _{L^{q}}\gtrsim n (1-r)^{\frac{1}{q}-2}$, 
$r\in(0,\,1)$, and so 
\[
\frac{\left\Vert h'\right\Vert _{L^{q}}}{\left\Vert h\right\Vert _{L^{p}}}
\gtrsim\frac{n}{(1-r)^{1+\frac{1}{p}-\frac{1}{q}}},
\qquad r\in(0,\,1), \ n\geq 3.
\]
\end{proof}

%***********************************************************************

\subsection{\label{subsub_sharp_Nik}Sharpness of \textup{(\ref{inequ2})}}

Let $g$ be the test function defined in (\ref{testfunc2}).
Recall that, by \eqref{Lp_norm_test_func_2},
$$
\left\Vert g\right\Vert _{L^{p}}\lesssim
\frac{n^{2-\frac{1}{p}}}{\left(1-r\right)^{1-\frac{1}{p}}}, \qquad 1 \le p\leq\infty.
$$
On the other hand, we have (after the change of the variable)
\[
\left\Vert g\right\Vert _{L^{q}}^{q}=\frac{1}{\left(1-r^{2}\right)^{q-1}}\int_{\mathbb{T}}\left|1+r\xi\right|^{2q-2}\left|D_{N+1}(\xi)\right|^{2q}{\rm d}{m}(\xi)
\]
and
\[
\int_{\mathbb{T}}\left|1+r\xi\right|^{2q-2}\left|D_{N+1}(\xi)\right|^{2q}{\rm d}{m}(\xi)\geq 
\int_{I_{N+1}}\left|D_{N+1}(\xi)\right|^{2q}{\rm d}{m}(\xi)
\gtrsim n^{2q-1}.
\]
We conclude that for $1\le p<q\le \infty$,
\[
\frac{\left\Vert g\right\Vert _{L^{q}}}{\left\Vert g\right\Vert _{L^{p}}}
\gtrsim\frac{n^{\frac{1}{p}-\frac{1}{q}}}{\left(1-r\right)
^{\frac{1}{p}-\frac{1}{q}}}.
\]                       
\qed

%***********************************************************************


\begin{thebibliography}{SMNik}


{\normalsize{\bibitem[A]{Aleks} A.B. Aleksandrov, }}\textit{\normalsize{Embedding theorems for coinvariant
subspaces of the shift operator. II}}{\normalsize{, Zap.
Nauchn. Sem. S.-Peterburg. Otdel. Mat.
Inst. Steklov (POMI) {\bf 262} (1999), 5--48;  English
transl. in J. Math. Sci. {\bf 110} (2002), 2907--2929.}}{\normalsize \par}

{\normalsize{\bibitem[B]{Bar} A.D. Baranov, }}\textit{\normalsize{Bernstein-type inequalities for shift-coinvariant subspaces and their 
applications to Carleson embeddings}}{\normalsize{, J. Funct. Anal. 
{\bf 223} (2005), 1, 116--146.}}{\normalsize \par}

{\normalsize{\bibitem[BE1]{erd1} P. Borwein, T. Erd\'elyi, }}\textit{\normalsize{Polynomials
and Polynomial Inequalities}}{\normalsize{, Springer-Verlag, 1995.}}{\normalsize \par}

{\normalsize{\bibitem[BE2]{be} P. Borwein, T. Erd\'elyi, }}\textit{\normalsize{Sharp
extensions of Bernstein's inequality to rational spaces}}{\normalsize{,
Mathematika }}\textbf{\normalsize{43}}{\normalsize{ (1996), 2, 413--423.}}{\normalsize \par}

{\normalsize{\bibitem[D]{Dol} E.P. Dolzhenko, }}\textit{\normalsize{Some
sharp integral estimates of the derivatives of rational and algebraic
functions. Applications}}{\normalsize{, Anal. Math. {\bf 4} (1978), 
4, 247--268 (Russian).}}{\normalsize \par}

{\normalsize{\bibitem[Dy1]{Dya3} K.M. Dyakonov, }}\textit{\normalsize{Smooth
functions in the range of a Hankel operator}}{\normalsize{, Indiana
Univ. Math. J. }}\textbf{\normalsize{43}}{\normalsize{ (1994), 805--838.}}{\normalsize \par}

{\normalsize{\bibitem[Dy2]{dy} K.M. Dyakonov, }}
\textit{\normalsize{Differentiation in star-invariant
subspaces I. Boundedness and compactness}}{\normalsize{, J. Funct. Anal. 
{\bf 192} (2002), 364--386.}}{\normalsize \par}

{\normalsize{\bibitem[FLM]{FLM} T. Finis, E. Lapid, W. M\"uller.
}}\textit{\normalsize{On the spectral side of Arthur's trace formula
- absolute convergence}}{\normalsize{. Ann. of Math. (2) {\bf 174} (2011), 
173--195.}}{\normalsize \par}

{\normalsize{\bibitem[JLMR]{JLMR} R. Jones, X. Li, R.N. Mohapatra,
R.S. Rodriguez, }}\textit{\normalsize{On the Bernstein inequality
for rational functions with a prescribed zero}}{\normalsize{, J. Approx.
Theory {\bf 95} (1998), 476--496.}}{\normalsize \par}

{\normalsize{\bibitem[K]{Kr} H.-O. Kreiss, }}
\textit{\normalsize{\"Uber die Stabilit\"atsdefinition 
f\"ur Differenzengleichungen die partielle Differentialgleichungen 
approximieren}}{\normalsize{, BIT {\bf 2} (1962), 
153--181.}}{\normalsize \par}

{\normalsize{\bibitem[LT]{LeTr} R.J. Leveque, L.N. Trefethen,
}}\textit{\normalsize{On the resolvent condition in the Kreiss matrix
Theorem}}{\normalsize{, BIT }}\textbf{\normalsize{24}}{\normalsize{
(1984), 584--591.}}{\normalsize \par}

{\normalsize{\bibitem[Le1]{lev} M.B. Levin, }}\textit{\normalsize{An
estimate of the derivative of a meromorphic function on the boundary
of domain}}{\normalsize{, Soviet Math. Dokl. }}\textbf{\normalsize{15}}{\normalsize{
(1974), 3, 831--834.}}{\normalsize \par}

{\normalsize{\bibitem[Le2]{lev1} M.B. Levin, }}\textit{\normalsize{Estimation
of the derivative of a meromorphic function on the boundary of domain}}{\normalsize{,
Teor. Funktsii Funktsional. Anal. i Prilozhen. }}\textbf{\normalsize{24}}{\normalsize{
(1975), 68--85 (Russian).}}{\normalsize \par}

{\normalsize{\bibitem[Li]{Li} X. Li, }}\textit{\normalsize{Integral
Formulas and inequalities for rational functions, }}{\normalsize{J.
Math. Anal. Appl. }}\textbf{\normalsize{211}}{\normalsize{
(1997), 2, 386--394}}{\normalsize \par}

{\normalsize{\bibitem[LMR]{lmr} X. Li, R. N. Mohapatra, R. S. Rodriguez,
}}\textit{\normalsize{Bernstein-type inequalities for rational functions
with prescribed poles}}{\normalsize{, J. London Math. Soc. }}\textbf{\normalsize{51}}{\normalsize{
(1995), 2, 523--531.}}{\normalsize \par}

{\normalsize{\bibitem[N]{NN} N.K. Nikolski, }}\textit{\normalsize{Sublinear
dimension growth in the Kreiss Matrix Theorem}}{\normalsize{, 
Algebra i Analiz {\bf 25} (2013), 3, 3--51.}}{\normalsize \par}

{\normalsize{\bibitem[SMN]{SMNik} S.M. Nikolskii,}}\textit{\normalsize{
Inequalities for entire functions of finite degree and their application
in the theory of dierentiable functions of several 
,}}{\normalsize{
Trudy Mat. Inst. Steklov {\bf 38} (1951), 244--278.}}{\normalsize \par}

{\normalsize{\bibitem[R]{rus} V. N. Rusak, }}\textit{\normalsize{Rational
Functions as Approximation Apparatus}}{\normalsize{, Izdat. Beloruss.
Gos. Univ., Minsk, 1979 (Russian).}}{\normalsize \par}

{\normalsize{\bibitem[SL]{sl} V. I. Smirnov, N. A. Lebedev, }}\textit{\normalsize{Functions
of a Complex Variable: Constructive Theory}}{\normalsize{, Nauka,
Moscow, 1964; Engl. transl. Cambridge, MIT Press, 1968.}}{\normalsize \par}

{\normalsize{\bibitem[S]{Sp} M. N. Spijker, }}\textit{\normalsize{On
a conjecture by LeVeque and Trefethen related to the Kreiss matrix
theorem,}}{\normalsize{ BIT }}\textbf{\normalsize{\bf 31}}{\normalsize{
(1991), 551--555.}}{\normalsize \par}

{\normalsize{\bibitem[SZ]{SzZy} G. Szeg\"o, A. Zygmund, }}\textit{\normalsize{On
certain meanvalues of polynomials}}{\normalsize{, J. Anal. Math. {\bf 3}
(1954), 225--244.}}{\normalsize \par}

%{\normalsize{\bibitem[WT]{WeTr} E. Wegert, D L. N. Trefethen, }}\textit{\normalsize{From
%the Buffon needle problem to the Kreiss matrix theorem}}{\normalsize{,
%Amer. Math. Monthly, 101 (1994), pp. 132--139.}}{\normalsize \par}

{\normalsize{\bibitem[Z]{Z1} R. Zarouf,}}\textit{\normalsize{ Asymptotic
sharpness of a Bernstein-type inequality for rational functions in
}}{\normalsize{$H^{2}$, Algebra i Analiz {\bf 23} (2011), 152--166.}}\end{thebibliography}
\end{document}